\providecommand{\U}[1]{\protect\rule{.1in}{.1in}}
\newtheorem{theorem}{Theorem}[section]
\newtheorem*{theorem*}{Theorem}
\theoremstyle{plain}
\newtheorem{corollary}{Corollary}[section]
\newtheorem{lemma}{Lemma}[section]
\newtheorem{proposition}{Proposition}[section]
\newtheorem{remark}{Remark}[section]
\numberwithin{equation}{section}
\newcommand{\md}{\mathrm{d}}
\def\dim{\operatorname{dim}}
\def\R{\mathbb{R}}
\def\N{\mathbb{N}}
\def\R{\mathbb{R}}
\begin{document}

\title[Sharp fractional Sobolev and related inequalities ]{Sharp fractional Sobolev and related inequalities on  H-type groups }

%    Information for first author

\author{Yaojun Wang}

\address{School of Mathematical Sciences/Shanghai Center for Mathematical Sciences,
Fudan University,
220 Handan Road,
Shanghai,  200433,
People's Republic of China}
\email{yaojunwang22@m.fudan.edu.cn}

\author{Qiaohua  Yang}
%    Address of record for the research reported here
\address{School of Mathematics and Statistics, Wuhan University, Wuhan, 430072, People's Republic of China}
%    Current address

\email{qhyang.math@whu.edu.cn}

%    \thanks will become a 1st page footnote.
\thanks{The second author  was partially supported by the National Natural
Science Foundation of China(No.11201346).}
%    Information for second author

%\thanks{Support information for the second author.}

%    General info
\subjclass[2000]{Primary: 46E35; 35R03;  22E25.}

%\date{January 1, 2001 and, in revised form, June 22, 2001.}

%\dedicatory{This paper is dedicated to our advisors.}

\keywords{Sobolev inequality; conformally invariant fractional powers; H-type group; best constant}

\begin{abstract}
We determine the sharp constants for  the fractional Sobolev inequalities associated with the   conformally invariant fractional powers $\mathcal{L}_{s}(0<s<1)$ of the sublaplacian on H-type groups. From these inequalities we derive
 a sharp log-Sobolev
inequality by  considering a limiting case and  a sharp Sobolev trace inequality.
The later extends  to this context   the  result of Frank,  Gonz\'alez,  Monticelli and Tan (Adv. Math, 2015) .

  \end{abstract}

\maketitle

%\section*{This is an unnumbered first-level section head}

\section{Introduction}

The classical Sobolev inequalities and their sharp constants have a wide range of applications  in analysis and geometry
 because they contain  geometric and probabilistic information. By using symmetrization arguments, Lieb  \cite{lieb1} (see also \cite{car, lieb2, fl3})
showed the  sharp Hardy-Littlewood-Sobolev inequality
\begin{align}\label{1.1}
\left|\int\int_{\mathbb{R}^{n}\times \mathbb{R}^{n}}\frac{f(x)\overline{g(y)}}{|x-y|^{\lambda}}\md x\md y\right|
\leq\pi^{\lambda/2}\frac{\Gamma((n-\lambda)/2)}{\Gamma(n-\lambda/2)} \left(\frac{\Gamma(n)}{\Gamma(n/2)}\right)^{1-\lambda/n}\|f\|_{p}\|g\|_{p},
\end{align}
where $0<\lambda<n$ and $p=\frac{2n}{2n-\lambda}$. The extremal functions in \eqref{1.1} are of the form
\begin{align*}
f(x)= g(x)=(1+|x|^{2})^{-(2n-\lambda)/2},
\end{align*}
up to the action of the conformal group of $\mathbb{R}^{n}$. By  a duality argument (see e.g. \cite{be1,be2, lieb2}), \eqref{1.1} is equivalent to the  sharp fractional Sobolev inequalities
\begin{align}\label{1.2}
\|(-\Delta)^{s/2}u\|^{2}\geq 2^{2s}\pi^{s}\frac{\Gamma((n+2s)/2)}{\Gamma((n-2s)/2)}
 \left(\frac{\Gamma(n/2)}{\Gamma(n)}\right)^{2s/n}\|u\|_{q}^{2},
\end{align}
where $0<s<n/2$ and $q=\frac{2n}{n-2s}$. Obviously,
for $s = 1$, \eqref{1.2} is the classical Sobolev inequality.

There is an  analogous Hardy-Littlewood-Sobolev  inequality on the Heisenberg group $\mathbb{H}^{n}$. Recall that $\mathbb{H}^{n}$ can be parameterized by
$(\mathbb{C}^{n}\times\mathbb{R},\circ)$ with the group law
\begin{align*}
(z,t)\circ (z',t')=(z+z',t+t'+2\textrm{Im}z\cdot z'),
\end{align*}
where $z\cdot z'=\sum\limits_{j=1}^{n}z_{j}\bar{z}_{j}'$.
The  homogeneous norm on $\mathbb{H}^{n}$ is given by
\begin{align*}
|(z,t)|=(|z|^{4}+t^{2})^{\frac{1}{4}}.
\end{align*}
The associated  homogeneous dimension is   $Q=2n+2$.
In a celebrated paper \cite{fl2}, Frank and Lieb determined the sharp constants
and extremal functions of Hardy-Littlewood-Sobolev  inequality on $\mathbb{H}^{n}$. We state the result  as follows:
\begin{theorem}[Frank-Lieb]\label{th1.1}
Let $0<\lambda<Q$ and $p=\frac{2Q}{2Q-\lambda}$. Then for any $f,g\in L^{p}(\mathbb{H}^{n})$,
\begin{align}\label{a1.3}
\left|\int\int_{\mathbb{H}^{n}\times \mathbb{H}^{n}}\frac{f(\xi)\overline{g(\eta)}}{|\xi^{-1}\circ \eta|^{\lambda}}\md \xi \md \eta\right|
\leq \left(\frac{\pi^{n+1}}{2^{n-1}n!}\right)^{\lambda/Q}\frac{n!\Gamma(\frac{Q-\lambda}{2})}{\Gamma^{2}(\frac{Q-2\lambda}{4})}\|f\|_{p}\|g\|_{p},
\end{align}
with equality if and only if, up to group translations and dilations,
\begin{align*}
f=c((1+|z|^{2})^{2}+t^{2})^{-\frac{2Q-\lambda}{4}},\;\;g=c'((1+|z|^{2})^{2}+t^{2})^{-\frac{2Q-\lambda}{4}}
\end{align*}
for some $c,c'\in\mathbb{C}$.
\end{theorem}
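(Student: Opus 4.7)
The plan is to follow the rearrangement-free strategy pioneered by Frank and Lieb, since the Heisenberg group admits no useful symmetric decreasing rearrangement preserving the kernel $|\xi^{-1}\circ\eta|^{-\lambda}$, so Lieb's original Euclidean argument does not transfer.

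First, I would transfer the problem to the CR sphere $S^{2n+1}\subset\mathbb{C}^{n+1}$ via a Cayley transform $\mathcal{C}:\mathbb{H}^n\to S^{2n+1}\setminus\{*\}$. This is a CR-conformal equivalence that sends the Heisenberg translations and dilations into a subgroup of the CR automorphism group $SU(n+1,1)$ of $S^{2n+1}$. Under $\mathcal{C}$ the kernel $|\xi^{-1}\circ \eta|^{-\lambda}$ pulls back, modulo an explicit conformal factor, to $|1-\zeta\cdot\bar\omega|^{-\lambda/2}$, and the $L^p$ densities transform so that \eqref{a1.3} is equivalent to the bilinear inequality
\[
\left|\int\!\!\int_{S^{2n+1}\times S^{2n+1}} \frac{F(\zeta)\overline{G(\omega)}}{|1-\zeta\cdot\bar\omega|^{\lambda/2}}\,d\sigma(\zeta)\,d\sigma(\omega)\right| \leq C_\lambda \|F\|_p \|G\|_p
\]
on the sphere, with constant functions as the presumed extremizers.

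Next, I would establish existence of extremizers. The $SU(n+1,1)$-invariance of the problem creates the usual loss of compactness, so the argument requires a concentration-compactness analysis in which the group action is used to translate and dilate any maximizing sequence into one that converges. Having fixed an extremizer $F^{*}$, I would decompose $L^{2}(S^{2n+1})=\bigoplus_{j,k\geq 0}\mathcal{H}_{j,k}$ into the bigraded $U(n+1)$-spherical harmonics, on which the integral operator with kernel $|1-\zeta\cdot\bar\omega|^{-\lambda/2}$ diagonalizes with eigenvalues expressible through Gamma functions via a Funk--Hecke-type formula for the CR sphere. The Euler--Lagrange equation for $F^{*}$ is then a nonlinear conformally invariant integral equation; using the $SU(n+1,1)$-action to place $F^{*}$ at a standard location and then applying a reflection or dual-formulation argument on $S^{2n+1}$, I would conclude that $F^{*}$ must be constant, so that the extremizers on $\mathbb{H}^{n}$ are precisely the claimed functions together with their translates and dilates.

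Finally, I would compute the sharp constant by inserting the trial function $f(z,t)=((1+|z|^{2})^{2}+t^{2})^{-(2Q-\lambda)/4}$ into the left-hand side of \eqref{a1.3}. After a translation/dilation reduction and passage to polar coordinates in $z$, the double Heisenberg integral collapses to a one-dimensional Beta integral and yields the stated explicit value. The main obstacle is the classification step: without a useful rearrangement on $\mathbb{H}^{n}$ or $S^{2n+1}$, one must replace Riesz's rearrangement inequality by a genuinely new argument exploiting CR conformal symmetry, and this is the technical heart of Frank--Lieb's proof.
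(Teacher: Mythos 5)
The paper does not prove Theorem \ref{th1.1}: it is quoted verbatim from Frank--Lieb \cite{fl2} and used only as motivation and a template for the authors' own result on H-type groups (Theorem \ref{th1.2}), which is proved by a hybrid of the Frank--Lieb second-variation argument and Hang--Wang's subcritical approximation. So strictly speaking there is no internal proof to compare against; I will compare your sketch against the actual Frank--Lieb proof that the paper cites and whose structure it explicitly summarizes in the introduction.

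Your outline gets the first two ingredients and the last one right: the Cayley transform to the CR sphere with kernel $|1-\zeta\cdot\bar\omega|^{-\lambda/2}$, the existence of an extremizer, and the Beta-integral computation of the constant from the trial function $((1+|z|^2)^2+t^2)^{-(2Q-\lambda)/4}$. But the classification step is described vaguely and, as written, would not close the argument. There is no ``reflection or dual-formulation argument'' in Frank--Lieb, and none is available precisely because there is no useful rearrangement on $S^{2n+1}$ -- that is the whole difficulty. The actual mechanism, which the paper itself names ("evaluate the optimizer via the second variation inequality with test functions provided by the moment zero condition"), is: use the $SU(n+1,1)$ action to normalize the extremizer $F^*$ so that its first $U(n+1)$-moments vanish; write down the second variation inequality at $F^*$ for perturbations orthogonal to $F^*$; insert the degree-$(1,0)$ and $(0,1)$ spherical harmonics $\zeta_j$, $\bar\zeta_j$, which the moment-zero normalization makes admissible; and combine with the explicit Funk--Hecke eigenvalues of the kernel on the bigraded decomposition $\bigoplus_{j,k}\mathcal{H}_{j,k}$ to deduce that the only possibility is $F^*$ constant. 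Without this second-variation/moment-zero mechanism your sketch has a gap at its ``technical heart.'' Also, Frank--Lieb's existence proof is not a generic concentration--compactness argument in the Lions sense; they handle the loss of compactness by comparing with a strictly-smaller-constant endpoint estimate, so if you intend to reproduce their existence step you should say so, or else invoke Hang--Wang's subcritical approximation (as the present paper does for its own theorem) to bypass existence entirely.
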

By  differentiating \eqref{a1.3}
 at the endpoints $\lambda = 0$ and $\lambda = Q$,  respectively, they  obtained a sharp logarithmic Hardy-Littlewood-Sobolev
inequality (see also \cite{br1}) and a sharp logarithmic Sobolev inequality.
In particular, choosing $\lambda=Q-2$ in Theorem \ref{th1.1} yields the Jerison-Lee inequality(see \cite{jl1,jl2,jl3}).

We remark that the  Frank-Lieb argument consists of two major steps. One is to show the   existence of extremal functions; the other is to evaluate the optimizer via
the second variation inequality with test functions provided by the moment zero condition.
Recently, Hang and Wang \cite{hang} presented a shorter proof of the Frank-Lieb inequality, in  which
they bypassed the subtle proof of existence via a subcritical approximation.

In the same paper, Frank and Lieb  \cite{fl2} conjectured  that their method could be applied to
 groups of Heisenberg type (in short, H-type groups) introduced by Kaplan \cite{k} (see  \cite{gv1} for the conjecture of the Jerison-Lee inequality on H-type groups).
In fact,  partial  results of Theorem \ref{th1.1}  have been generalized to the cases  of  quaternionic Heisenberg groups (see \cite{imv2,chz1}) and octonionic Heisenberg group (see \cite{chz2}) via Frank-Lieb approach.
We note these groups (i.e. Heisenberg groups,  quaternionic Heisenberg groups and octonionic Heisenberg group) are known as  the nilpotent component in the
Iwasawa decomposition of simple groups of rank one (in short, groups of Iwasawa type). They  are the subclasses of H-type groups. In fact,  most of H-type groups are not  groups of Iwasawa type (see  \cite{cdk}).

Very recently, the second author \cite{ya1} determined the sharp constant of the $L^2$ Folland-Stein inequality on any  H-type group, which extended the
Jerison-Lee inequality to this   context. The proof relied on combining these two methods, the Frank-Lieb approach and
the subcritical approximation method given by Hang and Wang \cite{hang}.

The aim of this paper is to look for the sharp constants of  fractional Sobolev inequalities associated with the   conformally invariant fractional powers $\mathcal{L}_{s}(0<s<1)$ of the sublaplacian on H-type groups. In addition to the two methods  above, the proof relies on
the ground state representation of $\mathcal{L}_{s}$, which has been studied  by Roncal and Thangavelu (see \cite{ron1,ron2}).
For more information about $\mathcal{L}_{s}$, we refer to Branson, Fontana, and Morpurgo \cite{br1} and Garofalo and Tralli \cite{ga1,ga2,ga3}.

To state our main result, we introduce some notations.
Recall that an H-type group $G$ is a
Carnot group of step two with the following properties (see Kaplan \cite{k}): the Lie
algebra $\mathfrak{g}$ of $G$ is endowed with an inner product $\langle \cdot,\cdot \rangle$
such that, if $\mathfrak{z}$ is the center of $\mathfrak{g}$, then
$[\mathfrak{z}^{\perp}, \mathfrak{z}^{\perp}]=\mathfrak{z}$.
Moreover, for every fixed $z\in \mathfrak{z}$, the map $J_{z}:
\mathfrak{z}^{\perp}\rightarrow \mathfrak{z}^{\perp}$ defined by
\begin{eqnarray*}%\label{1.2}
% \nonumber to remove numbering (before each equation)
\langle J_{z}(v),\omega\rangle=\langle z,[v,\omega]\rangle,\;\;\forall \omega\in\mathfrak
z^{\perp},
\end{eqnarray*}
is an orthogonal map whenever $\langle z,z\rangle=1$. This implies that  the  dimension of $\mathfrak{z}^{\perp}$ is even.
For simplicity, we set
\begin{align*}
\dim \mathfrak{ z}^{\perp}=2n,\;\; \dim \mathfrak{z}=m.
\end{align*}
One can
 fix on $G$ a system of coordinates $(z,w)$ such that the group
law on $G$ has the form (see \cite{bu})
\begin{equation}\label{1.4}
\begin{split}
(z,w)\circ(z',w')=&\begin{pmatrix}
  z_{j}+z'_{j},\;\;j=1,2,\cdots, 2n \\
  w_{k}+w'_{k}+\frac{1}{2}\langle z,U^{(k)}z'\rangle,\;\;k=1,2,\cdots, m\\
\end{pmatrix},
\end{split}
\end{equation}
where the matrices $U^{(1)}, U^{(2)}, \cdots, U^{(m)}$ have the following
properties:

(1) $U^{(j)}$ is a $2n\times 2n$ skew symmetric and orthogonal matrix,
for every $j=1,2,\cdots, m$;

(2) $U^{(j)} U^{(k)}+U^{(k)}U^{(j)}=0$ for every $j,k\in
\{1,2,\cdots, m\}$ with $j\neq k$.

For $(z,w)\in G$, we denote by the  homogeneous norm of $(z,w)$
\begin{align*}
|(z,w)|=\left(\frac{|z|^{4}}{16}+|w|^{2}\right)^{\frac{1}{4}}.
\end{align*}
The associated  homogeneous dimension is
\begin{align*}
Q=2n+2m.
\end{align*}

Let $\mathcal{L}$ be the
sublaplacian on $G$ associated with an orthonormal basis of $\mathfrak{z}^{\perp}$ and
 $\nabla_{G}$ be the corresponding horizontal gradient.  Via the spectral formula,
 the   conformally invariant fractional powers $\mathcal{L}_{s}(0<s<1)$ can be defined by (see \cite{br1,ron1,ron2,ga1,ga2}),
 \begin{align}\label{1.5}
\mathcal{L}_{s}=2^{s}(-\Delta_{w})^{s}\frac{\Gamma\left(\frac{\mathcal{L}}{2(-\Delta_{w})^{1/2}}+\frac{1+s}{2}\right)}
{\Gamma\left(\frac{\mathcal{L}}{2(-\Delta_{w})^{1/2}}+\frac{1-s}{2}\right)},\;\; \Delta_{w}=\sum_{j=1}^{m}\frac{\partial^{2}}{\partial w_{j}^{2}}.
 \end{align}
The  Sobolev space $W^{s,2}(G)$ is defined  as the closure of the space of functions $u\in C_{0}^{\infty}(G)$
with respect to the norm
\begin{align*}
\|u\|_{W^{s,2}(G)}=\left(\int_{G}u(\xi)\mathcal{L}_{s}u(\xi)\md\xi\right)^{1/2}.
\end{align*}
To this end, we have
\begin{theorem}\label{th1.2}
It holds that, for $0<s<1$,
\begin{align}\label{1.6}
\int_{G}u(\xi)\mathcal{L}_{s}u(\xi)\md\xi\geq S_{n,m,s}\left(\int_{G}|u(\xi)|^{\frac{2Q}{Q-2s}}\md\xi\right)^{\frac{Q-2s}{Q}}, \; u\in W^{s,2}(G),
\end{align}
where
\begin{equation}\label{S_nms}
       S_{n,m,s}=4^{s\frac{Q+2n}{Q}}\pi^{s\frac{2n+m}{Q}}\frac{\Gamma(\frac{n+1+s}{2})\Gamma(\frac{n+m+s}{2})}{\Gamma(\frac{n+1-s}{2})\Gamma(\frac{n+m-s}{2})}\left(  \frac{\Gamma(n+\frac{m}{2})}{\Gamma (2n+m)}\right)^{2s/Q}.
\end{equation}
The inequality is sharp and an extremal function  is
\begin{align}\label{1.7}
U(z,w)=\left[\left(1+\frac{|z|^{2}}{4}\right)^{2}+|w|^{2}\right]^{-\frac{Q-2s}{4}}.
\end{align}
\end{theorem}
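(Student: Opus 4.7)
The plan is to combine the Frank-Lieb strategy for sharp Hardy-Littlewood-Sobolev (HLS) inequalities with the Hang-Wang subcritical approximation, adapted to general H-type groups via the Roncal-Thangavelu ground-state representation of $\mathcal{L}_s$. The first step is dualization: since $\mathcal{L}_s$ is conformally invariant on $G$ and its Green kernel is a constant multiple of $|\xi^{-1}\circ\eta|^{-(Q-2s)}$, inequality \eqref{1.6} is equivalent to a sharp HLS inequality
$$\left|\iint_{G\times G}\frac{f(\xi)\overline{g(\eta)}}{|\xi^{-1}\circ\eta|^{Q-2s}}\md\xi\md\eta\right|\le C(n,m,s)\|f\|_{p}\|g\|_{p},\qquad p=\tfrac{2Q}{Q+2s},$$
which is the exact analogue of Theorem \ref{th1.1} at $\lambda=Q-2s$. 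It therefore suffices to identify the optimal $C(n,m,s)$ and classify the extremals in this dual form; inequality \eqref{1.6} with the stated $S_{n,m,s}$ then follows by evaluating both sides on the function $U$ of \eqref{1.7}.

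For existence of extremals I would follow Hang-Wang. Fix a slightly subcritical exponent $q_\e=\tfrac{2Q}{Q-2s}-\e$, for which the embedding $W^{s,2}(G)\hookrightarrow L^{q_\e}$ becomes compact modulo the non-compact symmetries (left translations and anisotropic dilations of $G$). Standard concentration-compactness yields a maximizer $u_\e$ of the subcritical quotient; after renormalization by the symmetries, a subsequence converges to a critical maximizer $u_0$, bypassing the delicate direct existence proof at the critical exponent.

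The main obstacle, and the heart of the argument, is the classification of $u_0$ up to the conformal action on $G$. Following Frank-Lieb, I would compute the second variation of the functional at $u_0$ along test functions $v$ drawn from the moment-zero space generated by the conformal group of $G$ (translations, dilations, and a Kelvin-type inversion, all of which are well-defined on any H-type group, whether or not of Iwasawa type). The decisive analytic tool is the Roncal-Thangavelu ground-state representation of $\mathcal{L}_s$, which produces a pointwise identity of the schematic form $U^{-1}\mathcal{L}_s(U\varphi)=\mathcal{A}\varphi$ for an explicit positive symmetric operator $\mathcal{A}$ weighted by $U^{2}$. Using this identity, one recognizes the kernel of the second-variation form as precisely the tangent space of the conformal orbit of $U$; combined with the information that $u_0$ is a genuine maximizer (which the subcritical approximation supplies), this forces $u_0$ to be conformally equivalent to $U$. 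The hardest part will be adapting the conformal test-function construction to H-type groups lacking an Iwasawa structure, where the machinery of rank-one simple Lie groups used in \cite{fl2} is unavailable, and for which the hybrid approach of \cite{hang,ya1} provides the blueprint. Once the classification is in hand, the explicit constant $S_{n,m,s}$ in \eqref{S_nms} arises from direct evaluation of integrals of the form $\int_G[(1+|z|^2/4)^2+|w|^2]^{-a}\md z\md w$ via Fourier transform in the central variable $w$ (yielding the factors $\Gamma(\tfrac{n+m\pm s}{2})$) followed by polar integration in $z$ (yielding $\Gamma(\tfrac{n+1\pm s}{2})$).
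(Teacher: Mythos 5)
Your high-level references are right (Frank--Lieb, Hang--Wang, and the Roncal--Thangavelu ground-state representation), but the proposal diverges from the paper's proof in two places where it would actually break down.

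First, the existence step is not correct as stated. The embedding $W^{s,2}(G)\hookrightarrow L^{q_\e}(G)$ is never compact, even for subcritical $q_\e$, because left translations (and, to a lesser extent, scaling) destroy compactness on the noncompact group $G$; invoking ``concentration-compactness modulo symmetries'' at a subcritical exponent gains you nothing over the critical exponent, and this is precisely the delicate step that the Hang--Wang device is designed to avoid. What the paper actually does is replace the unweighted $L^{q_\e}$ target by the \emph{weighted} space $L^p(G,\,U^{\frac{2Q}{Q-2s}-p}\,\md z\md w)$, with $U$ the candidate extremal \eqref{1.7}. The decay of the weight makes the embedding $W^{s,2}(G)\hookrightarrow L^p(G,\,U^{\frac{2Q}{Q-2s}-p}\,\md z\md w)$ genuinely compact for $2\leq p<\frac{2Q}{Q-2s}$ (Lemmas~\ref{lemma 4.2}--\ref{lm4.3}), so the subcritical minimization problem \eqref{4.6} has a solution directly, with no concentration-compactness or renormalization by symmetries.

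Second, the classification step is not a kernel computation at a critical extremal $u_0$, and in particular there is no ``Kelvin-type inversion'' machinery needed. Instead, the paper applies the second-variation inequality of $\mathcal{F}_p$ at the \emph{subcritical} minimizer $u_p$ with the specific test functions $u_p\,\omega_j$, where the $\omega_j$ (defined in \eqref{def of omega_i}) come from differentiating the conformal orbit of the fixed bubble $U$, not of $u_p$; the moment-zero conditions \eqref{3.2} guarantee these are admissible. The key technical input is Lemma~\ref{L_s f w_i, fw_i}, proved via the ground-state representation and $\sum_j\omega_j^2=1$, which bounds $\sum_j\int f\omega_j\,\mathcal{L}_s(f\omega_j)$ by $\int f\mathcal{L}_s f + \frac{4s}{Q-2s}N_{n,m,s}\int f^2 U^{4s/(Q-2s)}$. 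Feeding this into the second variation and letting $p\nearrow\frac{2Q}{Q-2s}$ forces the Hardy deficit $\int u_p\mathcal{L}_s u_p - N_{n,m,s}\int u_p^2 U^{4s/(Q-2s)}$ to vanish, and then the equality case of the Hardy inequality \eqref{Hardy-type U inequality} (Lemma~\ref{lm3.2}, via ground-state representation again) pins down the limit of $u_{p_k}$ as $c_0 U$. Your proposal's plan to work in the dual HLS form and then recognize the second-variation kernel as the tangent space to the conformal orbit is closer in spirit to the original Frank--Lieb argument on the sphere, which is exactly what the subcritical weighting is there to circumvent.
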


By an analogous  duality argument to the Euclidean case and using the following  Green's function of $\mathcal{L}_s$ (see  Lemma \ref{lm2.2}):
\begin{align}\label{c_nms}
\mathcal{L}_s^{-1}=c_{n,m,s}\frac{1}{|(z,w)|^{Q-2s}},\;\; c_{n,m,s}=\frac{\Gamma(\frac{n+1-s}{2})\Gamma(\frac{n+m-s}{2})}{2^{n+1+s} \pi^{n+\frac{m+1}{2}}\Gamma(s)},
\end{align}
we obtain the following
sharp Hardy-Littlewood-Sobolev inequality on $G$.
\begin{corollary}\label{co1.1}
Let $0<s<1$ and $p=\frac{2Q}{Q+2s}$. Then for any $f,g\in L^{p}(G)$,
\begin{align}\label{1.8}
\left|\int\int_{G\times G}\frac{f(\xi)\overline{g(\eta)}}{|\eta^{-1}\circ \xi|^{Q-2s}}\md \xi \md\eta\right|
\leq S_{n,m,s}^{-1}c_{n,m,s}^{-1}\|f\|_{p}\|g\|_{p},
\end{align}\label{1.3}
where $S_{n,m,s}$ and $c_{n,m,s}$ are given in \eqref{S_nms} and \eqref{c_nms}.
The inequality is sharp and the equality holds if
\begin{align*}
f(z,w)=g(z,w)=\left(\left(1+\frac{|z|^{2}}{4}\right)^{2}+|w|^{2}\right)^{-\frac{Q+2s}{4}}.
\end{align*}
\end{corollary}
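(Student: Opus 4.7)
The plan is to derive Corollary \ref{co1.1} from Theorem \ref{th1.2} by a standard duality argument, using the explicit fundamental solution of $\mathcal{L}_s$ from Lemma \ref{lm2.2}. Observe that the exponents $p=\frac{2Q}{Q+2s}$ and $q=\frac{2Q}{Q-2s}$ are H\"older conjugates. Rewriting Theorem \ref{th1.2} as $\|\mathcal{L}_s^{1/2}u\|_{L^2(G)}\geq S_{n,m,s}^{1/2}\|u\|_{L^q(G)}$ and substituting $u=\mathcal{L}_s^{-1/2}v$ on a dense subspace, I obtain the mapping estimate $\|\mathcal{L}_s^{-1/2}v\|_{L^q(G)}\leq S_{n,m,s}^{-1/2}\|v\|_{L^2(G)}$. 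Since $\mathcal{L}_s^{-1/2}$ is self-adjoint (being spectrally defined from the positive self-adjoint operator $\mathcal{L}_s$), its dual reads $\|\mathcal{L}_s^{-1/2}f\|_{L^2(G)}\leq S_{n,m,s}^{-1/2}\|f\|_{L^p(G)}$.

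By Lemma \ref{lm2.2}, the operator $\mathcal{L}_s^{-1}$ is convolution by $c_{n,m,s}|\xi|^{-(Q-2s)}$, so the bilinear form in \eqref{1.8} can be recast as
\begin{align*}
\iint_{G\times G}\frac{f(\xi)\overline{g(\eta)}}{|\eta^{-1}\circ\xi|^{Q-2s}}\md\xi\md\eta
= c_{n,m,s}^{-1}\bigl\langle\mathcal{L}_s^{-1/2}f,\mathcal{L}_s^{-1/2}g\bigr\rangle_{L^2(G)}.
\end{align*}
Applying Cauchy--Schwarz to this $L^2$ inner product, and then the dual Sobolev estimate on each factor, yields
\begin{align*}
\left|\iint_{G\times G}\frac{f(\xi)\overline{g(\eta)}}{|\eta^{-1}\circ\xi|^{Q-2s}}\md\xi\md\eta\right|
\leq c_{n,m,s}^{-1}\|\mathcal{L}_s^{-1/2}f\|_{L^2(G)}\|\mathcal{L}_s^{-1/2}g\|_{L^2(G)}
\leq S_{n,m,s}^{-1}c_{n,m,s}^{-1}\|f\|_{p}\|g\|_{p},
\end{align*}
which is precisely \eqref{1.8}.

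For sharpness and the identification of extremals, both inequalities in the above chain must be equalities. Cauchy--Schwarz is saturated when $\mathcal{L}_s^{-1/2}f$ and $\mathcal{L}_s^{-1/2}g$ are proportional, and the dual Sobolev estimate is saturated exactly when the primal one is. The Euler--Lagrange equation associated with Theorem \ref{th1.2} forces the optimizer $U$ from \eqref{1.7} to satisfy $\mathcal{L}_s U = \lambda U^{(Q+2s)/(Q-2s)}$ for some $\lambda>0$. Therefore, taking $f=g=U^{(Q+2s)/(Q-2s)}=[(1+|z|^2/4)^2+|w|^2]^{-(Q+2s)/4}$ reverses the above chain to equality, matching the extremizer stated in the corollary. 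The main technical subtlety is the rigorous handling of the spectrally defined $\mathcal{L}_s^{-1/2}$ at the $L^p$--$L^q$ endpoints; this is done by first working on $C_0^\infty(G)$ (dense in $W^{s,2}(G)$) and passing to the limit via the explicit convolution kernel of $\mathcal{L}_s^{-1}$ furnished by Lemma \ref{lm2.2}.
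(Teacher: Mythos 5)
Your argument is correct and takes exactly the approach the paper intends: the paper states Corollary \ref{co1.1} as a consequence of Theorem \ref{th1.2} ``by an analogous duality argument to the Euclidean case'' together with the Green's function formula of Lemma \ref{lm2.2}, and your proof spells out precisely that duality chain (factoring the bilinear form through $\mathcal{L}_s^{-1/2}$, using Cauchy--Schwarz, and invoking the dual $L^p\to L^2$ bound for $\mathcal{L}_s^{-1/2}$). The extremizer check via the Euler--Lagrange identity $\mathcal{L}_s U = N_{n,m,s}U^{(Q+2s)/(Q-2s)}$ and the relation $S_{n,m,s}=N_{n,m,s}\bigl(\int_G U^{2Q/(Q-2s)}\bigr)^{2s/Q}$ also matches what the paper's computations give.
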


\begin{remark}\label{remark1.1}
Let $$\mathbb{S}^{2n+m}=\{(z',w',t')\in\mathfrak{z^{\bot}}\oplus\mathfrak{z}\oplus \mathbb{R}:\;|z'|^{2}+|w'|^{2}+|t'|^{2}=1\}.$$
The Cayley transform $\mathscr{C}:\;G\rightarrow\mathbb{S}^{2n+m}$ is given
 by (see e.g. \cite{acb,cdk})
 \begin{align}\label{1.9}
\mathscr{C}(z,w)=\frac{1}{\mathscr{B}(z,w)}\left(\bar{\mathscr{A}}(z,w)z,2w,-1+\frac{|z|^{4}}{16}+|w|^{2}\right),
 \end{align}
where $\mathscr{A}(z,w)$ and $\bar{\mathscr{A}}(z,w)$ denote the
linear maps $1+\frac{|z|^{2}}{4}+J_{w}$ and
$1+\frac{|z|^{2}}{4}-J_{w}$ on $\mathfrak{z^{\bot}}$, and the real
number $\mathscr{B}(z,w)$ is defined by
\begin{align*}
\mathscr{B}(z,w)=\left(1+\frac{|z|^{2}}{4}\right)^{2}+|w|^{2}.
\end{align*}
Via the Cayley transform \eqref{1.9}, there is an equivalent version of \eqref{1.8} on $\mathbb{S}^{2n+m}$. However,
the expression of $|\eta^{-1}\circ \xi|$ on $\mathbb{S}^{2n+m}$ is  very cumbersome unless $G$ is a group of Iwasawa type (see \cite[Lemma 2.2]{acb}).

\end{remark}

By  differentiating
\eqref{1.8} at the endpoint $s = 0$, we obtain the following sharp logarithmic Sobolev inequality (see Beckner \cite{be3,be4} for logarithmic Sobolev inequality on $\mathbb{S}^{n}$).
\begin{theorem}\label{th1.3}
For any nonnegative $f$ with $\int_{G}f^{2}(\xi)\ln f^2(\xi)J_{\mathscr{C}}(\xi)\md\xi<\infty$ and
\begin{align}\label{b1.12}
\int_{G}f^{2}(\xi)J_{\mathscr{C}}(\xi)\md\xi=\int_GJ_{\mathscr{C}}(\xi)\md\xi =4^{n}\pi^{n+\frac{m}{2}}\frac{\Gamma(n+\frac{m}{2})}{\Gamma (2n+m)},
\end{align}
where
\begin{align}\label{1.10}
J_{\mathscr{C}}(z,w)=\left(\left(1+\frac{|z|^{2}}{4}\right)^{2}+|w|^{2}\right)^{-\frac{Q}{2}}
\end{align}
is the Jacobian determinant of Cayley transform \eqref{1.9},
one has
\begin{align*}%\label{log Sobolev}
   \int_G\int_G \frac{|f(\xi) -f(\eta)|^2}{|\eta^{-1}\circ\xi|^Q}J^{1/2}_{\mathscr{C}}(\xi)J^{1/2}_{\mathscr{C}}(\eta)\md \xi \md \eta
     \geq\frac{2^{n+3}\pi^{n+\frac{m+1}{2}}}{Q\Gamma(\frac{n+1}{2})\Gamma(\frac{n+m}{2})}\int_G f^2(\xi)\ln f^2(\xi) J_{\mathscr{C}}(\xi)\md \xi.
\end{align*}
Furthermore, the constant $\frac{2^{n+3}\pi^{n+\frac{m+1}{2}}}{Q\Gamma(\frac{n+1}{2})\Gamma(\frac{n+m}{2})}$ is sharp.
\end{theorem}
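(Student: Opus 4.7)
The plan is to derive the log-Sobolev inequality by expanding the sharp Hardy-Littlewood-Sobolev inequality of Corollary \ref{co1.1} around the endpoint $s=0$. I would substitute $f=g=h\cdot J_\mathscr{C}^{(Q+2s)/(2Q)}$ into \eqref{1.8} and normalize $h$ by the hypothesis $\int_G h^2 J_\mathscr{C}\,d\xi = V := \int_G J_\mathscr{C}\,d\xi$. With $1/p = (Q+2s)/(2Q)$, the HLS inequality becomes
$$
\int\!\!\int \frac{h(\xi)h(\eta)J_\mathscr{C}^{1/p}(\xi)J_\mathscr{C}^{1/p}(\eta)}{|\eta^{-1}\circ\xi|^{Q-2s}}\,d\xi\,d\eta \le S_{n,m,s}^{-1}c_{n,m,s}^{-1}\Bigl(\int_G h^p J_\mathscr{C}\,d\xi\Bigr)^{2/p}.
$$
The crucial structural fact is that equality holds at $h\equiv 1$, since $J_\mathscr{C}^{(Q+2s)/(2Q)}$ is precisely the extremal family from Corollary \ref{co1.1}.

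Applying the algebraic identity $h(\xi)h(\eta)=\tfrac12(h(\xi)^2+h(\eta)^2)-\tfrac12(h(\xi)-h(\eta))^2$ and exploiting symmetry, I rewrite the left-hand side above as
$$
\int h^2(\xi)J_\mathscr{C}^{1/p}(\xi)\Psi_s(\xi)\,d\xi - \tfrac12 D(s,h),
$$
where $\Psi_s(\xi)=\int J_\mathscr{C}^{1/p}(\eta)|\eta^{-1}\xi|^{-(Q-2s)}\,d\eta$ and $D(s,h)$ is the bilinear form that degenerates as $s\to 0$ into the left-hand side of the claim. The key computation is then the Euler-Lagrange identity for the extremal $U=J_\mathscr{C}^{(Q-2s)/(2Q)}$ of Theorem \ref{th1.2}: applying $\mathcal{L}_s$ produces a multiple of $J_\mathscr{C}^{(Q+2s)/(2Q)}$, and inverting via the Green function from Lemma \ref{lm2.2} yields the explicit formula
$$
\Psi_s(\xi) = S_{n,m,s}^{-1}c_{n,m,s}^{-1}V^{2s/Q}J_\mathscr{C}^{(Q-2s)/(2Q)}(\xi).
$$
The exponents conspire so that $J_\mathscr{C}^{1/p}\cdot J_\mathscr{C}^{(Q-2s)/(2Q)}=J_\mathscr{C}$, and the normalization absorbs the first term exactly. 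The HLS inequality thereby collapses to
$$
\tfrac12 D(s,h) \ge S_{n,m,s}^{-1}c_{n,m,s}^{-1}\Bigl[V^{2/p}-\bigl(\textstyle\int_G h^p J_\mathscr{C}\,d\xi\bigr)^{2/p}\Bigr].
$$

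Now I would expand in $s$. Since $p-2=-4s/Q+O(s^2)$, Taylor expanding $h^p=h^2\exp((p-2)\ln h)$ gives $\int h^p J_\mathscr{C}=V-\tfrac{2s}{Q}\int h^2\ln h^2\,J_\mathscr{C}\,d\xi+O(s^2)$, and a short computation turns the bracket into $\tfrac{2s}{Q}\int_G h^2\ln h^2\,J_\mathscr{C}\,d\xi+O(s^2)$. On the other hand, using $1/\Gamma(s)=s+\gamma s^2+O(s^3)$ together with $S_{n,m,0}=1$ gives
$$
s\cdot S_{n,m,s}^{-1}c_{n,m,s}^{-1}\;\longrightarrow\;\frac{2^{n+1}\pi^{n+(m+1)/2}}{\Gamma(\frac{n+1}{2})\Gamma(\frac{n+m}{2})}
\quad\text{as }s\to 0^+.
$$
The $1/s$ pole of $c_{n,m,s}^{-1}$ cancels precisely the order-$s$ vanishing of the bracket, so letting $s\to 0^+$ and observing $D(s,h)\to D(0,h)$ produces the stated inequality with the sharp constant $\tfrac{2^{n+3}\pi^{n+(m+1)/2}}{Q\Gamma(\frac{n+1}{2})\Gamma(\frac{n+m}{2})}$; the extra factor of $2$ relative to $s\cdot S^{-1}c^{-1}$ comes from the $\tfrac12$ in the split.

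Sharpness would follow by tracking equality cases: the translates and dilates of $J_\mathscr{C}^{(Q+2s)/(2Q)}$ are extremal in Corollary \ref{co1.1} for every $s\in(0,1)$, the corresponding functions $h_s$ preserve the normalization at each $s$, and passing to the limit yields a family attaining the log-Sobolev bound. The main technical obstacle will be the uniform justification of the limit: one needs a dominated convergence argument for $D(s,h)\to D(0,h)$ and a uniform control of the $O(s^2)$ remainders in terms of $\int h^2\ln^2 h\cdot J_\mathscr{C}\,d\xi$, which can be handled by first proving the inequality on a dense class of bounded, compactly supported $h$ and then extending by standard density.
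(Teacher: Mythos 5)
Your derivation of the inequality follows the same route as the paper. You substitute $f=g=h\,J_\mathscr{C}^{(Q+2s)/(2Q)}$ into the sharp HLS inequality of Corollary \ref{co1.1}, split the product to isolate the singular double integral, identify the resulting single integral using the Euler--Lagrange identity for the extremal $U$ inverted through the Green's function of Lemma \ref{lm2.2} (this is exactly the paper's identity \eqref{5.2}), expand the bracket in $s$ under the normalization $\int h^2 J_\mathscr{C}=V$, and use the asymptotic $\lim_{s\to 0^+}S_{n,m,s}c_{n,m,s}/s=\Gamma(\tfrac{n+1}{2})\Gamma(\tfrac{n+m}{2})/(2^{n+1}\pi^{n+(m+1)/2})$ to extract the constant. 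The scalar bookkeeping ($S_{n,m,0}=1$, $p-2=-4s/Q+O(s^2)$, the factors of $\tfrac12$ and $\tfrac{2}{Q}$) is correct and reproduces the paper's chain \eqref{5.1}--\eqref{5.4}.

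Your sharpness argument, however, is a genuine departure from the paper's and has a gap as written. The paper does not pass HLS optimizers to a limit; it instead takes the explicit test family $F_\varepsilon=C_\varepsilon(\sqrt{1-\varepsilon^2}+\varepsilon\omega_1)$, a perturbation of the constant $1$ by the eigenfunction $\omega_1$ from \eqref{def of omega_i}, uses the eigenvalue identity \eqref{4.12} to compute both sides of the log-Sobolev inequality to order $\varepsilon^2$, and shows the ratio converges to the claimed constant as $\varepsilon\searrow 0$. Your proposal runs into two concrete obstructions. First, the only HLS extremal whose associated $h_s$ satisfies $\int h_s^2 J_\mathscr{C}=V$ without rescaling is $h_s\equiv 1$, for which the limit gives $0=0$ and is vacuous; any nontrivial translate or dilate must be rescaled to restore the normalization, and the assertion that the $h_s$ ``preserve the normalization at each $s$'' is not justified. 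Second, even after fixing the normalization, the family $h_s$ varies with $s$, so passing to the limit requires showing simultaneously that $D(s,h_s)\to D(0,h_0)$ and that the $O(s^2)$ remainders in the entropy expansion are uniform along the family $\{h_s\}$; neither is established, and this is precisely the delicate point. The paper avoids both issues entirely by choosing a perturbation family indexed by $\varepsilon$ with $s$ already sent to zero, so nothing varies with $s$. Your route could probably be made rigorous with substantial extra work, but as stated it does not establish sharpness.
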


Finally, we obtain the following Sobolev trace inequality which
generalizes   the  result of Frank,  Gonz\'alez,  Monticelli and Tan (see \cite{fr1}).
\begin{theorem}\label{th1.4}
Let $0<s<1$ and  $\widetilde{W^{s,2}_{0}}(G\times [0,\infty))$ be the completion of $u\in C_{0}^{\infty}(G\times \mathbb{R})$
with respect to the norm
\begin{align*}
\left(\int_{0}^{\infty}\int_{G}\Big(|\partial_{\rho}u|^{2}+\frac{1}{4}\rho^{2}|\nabla_{w}u|^{2}+|\nabla_{G}u|^{2}\Big)\rho^{1-2s} \md z\md w\md\rho\right)^{1/2}.
\end{align*}
Then we have, for $u\in \widetilde{W^{s,2}_{0}}(G\times [0,\infty))$,
\begin{align}\nonumber
&\int_{0}^{\infty}\int_{G}\Big(|\partial_{\rho}u|^{2}+\frac{1}{4}\rho^{2}|\nabla_{w}u|^{2}+|\nabla_{G}u|^{2}\Big)\rho^{1-2s} \md z\md w\md\rho\\
\label{1.14}
\geq& 2^{1-2s}\frac{\Gamma(1-s)}{\Gamma(s)}S_{n,m,s}\left(  \int_G |u(z,w,0)|^\frac{2Q}{Q-2s}\md z\md w\right)^{\frac{Q-2s}{Q}},
\end{align}
where $S_{n,m,s}$ is given in \eqref{S_nms}. The inequality is sharp and an extremal function  is given by the Poisson integral of $U(z,w)$ (details of this integral  will be explained in  Section 5),
where $U(z,w)$ is given in \eqref{1.7}.
\end{theorem}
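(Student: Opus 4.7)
\textbf{Proof strategy for Theorem \ref{th1.4}.} The plan is to realise the quadratic form on the left of \eqref{1.14} as the Dirichlet energy of a degenerate elliptic extension problem whose weighted Dirichlet-to-Neumann map recovers $\mathcal{L}_{s}$, and then to combine the resulting variational inequality with Theorem \ref{th1.2}. This is the H-type analogue of the Caffarelli--Silvestre extension, which for the Heisenberg group was worked out by Frank--Gonz\'alez--Monticelli--Tan, and whose extension kernel on general H-type groups is implicit in the work of Roncal--Thangavelu and Garofalo--Tralli.

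First, I would fix the extension operator
\begin{equation*}
\mathcal{E}_{s}v := \partial_{\rho}^{2}v + \frac{1-2s}{\rho}\,\partial_{\rho}v - \mathcal{L}v + \frac{\rho^{2}}{4}\Delta_{w}v,
\end{equation*}
whose Euler--Lagrange equation is exactly the vanishing of the first variation of the energy on the left of \eqref{1.14}. For $f\in W^{s,2}(G)$ I would denote by $v=P_{s}f$ the decaying solution of $\mathcal{E}_{s}v=0$ on $G\times(0,\infty)$ with trace $v|_{\rho=0}=f$. Since $\mathcal{L}$ and $-\Delta_{w}$ commute, one can diagonalise them jointly; for each spectral parameter the extension ODE becomes a Bessel-type equation whose decaying solution is essentially a modified Bessel function, and matching the asymptotics of $K_{s}$ as $\rho\to 0^{+}$ with the spectral formula \eqref{1.5} yields the Dirichlet-to-Neumann identity
\begin{equation*}
-\lim_{\rho\to 0^{+}}\rho^{1-2s}\,\partial_{\rho}(P_{s}f) \;=\; 2^{1-2s}\,\frac{\Gamma(1-s)}{\Gamma(s)}\,\mathcal{L}_{s}f.
\end{equation*}

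Second, I would prove the variational principle: for every $u\in\widetilde{W^{s,2}_{0}}(G\times[0,\infty))$ with trace $f$, the extension $v=P_{s}f$ minimises the weighted Dirichlet energy. Writing $u=v+(u-v)$ and expanding, the cross-term vanishes after integration by parts thanks to $\mathcal{E}_{s}v=0$, together with the fact that the weight $\rho^{1-2s}$ kills the boundary contribution at $\rho=0$ and that admissible $u$ decay at $\rho=\infty$. A second integration by parts applied to $v$ itself then reduces its energy to a boundary term:
\begin{equation*}
\int_{0}^{\infty}\!\!\int_{G}\bigl(|\partial_{\rho}v|^{2}+\tfrac{1}{4}\rho^{2}|\nabla_{w}v|^{2}+|\nabla_{G}v|^{2}\bigr)\rho^{1-2s}\,\md z\,\md w\,\md\rho \;=\; 2^{1-2s}\,\frac{\Gamma(1-s)}{\Gamma(s)}\int_{G}f\,\mathcal{L}_{s}f\,\md z\,\md w.
\end{equation*}
Applying Theorem \ref{th1.2} to $f$ now yields \eqref{1.14}. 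Taking $f=U$ from \eqref{1.7} simultaneously saturates the variational inequality (with $u=P_{s}U$) and Theorem \ref{th1.2}, which identifies $P_{s}U$ as an extremal and confirms sharpness of the constant.

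The main obstacle is the rigorous proof of the Dirichlet-to-Neumann identity with the precise constant $2^{1-2s}\Gamma(1-s)/\Gamma(s)$: one must build the Poisson kernel for $\mathcal{E}_{s}$ explicitly on an arbitrary H-type group and control the $\rho\to 0^{+}$ asymptotics of the normal derivative, where in the horizontal directions the sublaplacian behaves, in the joint spectral picture, like a family of Hermite-type operators parameterised by the central frequency. A secondary technical point is the density of $C_{0}^{\infty}(G\times\R)$ (restricted to the half-space) in $\widetilde{W^{s,2}_{0}}(G\times[0,\infty))$ in the $\rho^{1-2s}$-weighted norm, which is required to legitimate the integrations by parts used above. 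Once both points are settled, the rest of the argument is a clean combination of the extension principle with Theorem \ref{th1.2}.
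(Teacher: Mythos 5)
Your proposal follows essentially the same route as the paper: realise the weighted Dirichlet energy via the extension problem, use the Dirichlet-to-Neumann identity $-\lim_{\rho\to 0^{+}}\rho^{1-2s}\partial_{\rho}\mathscr{P}(f)=2^{1-2s}\tfrac{\Gamma(1-s)}{\Gamma(s)}\mathcal{L}_{s}f$, show that the Poisson extension minimises the energy so that the left side of \eqref{1.14} dominates $2^{1-2s}\tfrac{\Gamma(1-s)}{\Gamma(s)}\int_G f\,\mathcal{L}_s f$, and then apply Theorem \ref{th1.2}, with $\mathscr{P}(U)$ as the extremiser. The only difference is that you propose to re-derive the Poisson kernel and the Neumann constant by joint spectral diagonalisation, whereas the paper simply cites Roncal--Thangavelu and Garofalo--Tralli for those facts.
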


The organization of this paper is as follows. In Section 2, we will review some preliminary facts  that will be needed in the subsequent sections.
In Section 3, we  give another proof of  Hardy's  inequality via the  ground state representation of $\mathcal{L}_s$.
The
proof of Theorem \ref{th1.2} is  given in Section 4. In Section 5, we give the proofs of Theorems \ref{th1.3} and  \ref{th1.4}.

\section{Notations and preliminaries}

We begin by quoting some preliminary facts which will be needed in
the sequel.

\subsection{Sublaplacian on H-type groups}
In the rest of paper, we let $G$ be an H-type group with group law given by \eqref{1.4}.
The vector field in the Lie algebra
$\mathfrak{g}$  that agrees at the
origin with $\frac{\partial}{\partial z_{j}}(j=1,\cdots,2n)$ is given
by
\begin{equation*}
\begin{split}
X_{j} = \frac{\partial}{\partial
z_{j}}+\frac{1}{2}\sum^{m}_{k=1}\left(
\sum_{l=1}^{2n}U^{(k)}_{l,j}z_{l}\right)\frac{\partial}{\partial
w_{k}}
\end{split}
\end{equation*}
and $\mathfrak{g}$ is spanned by the left-invariant vector
fields
$$X_{1},\cdots,X_{2n},\frac{\partial}{\partial
w_{1}},\cdots,\frac{\partial}{\partial w_{m}}.$$
The horizontal gradient on $G$ is $\nabla_{G}=(X_{1},\cdots,X_{2n})$ and the sublaplacian on $G$ is given by
\begin{equation}\label{2.1}
\begin{split}
\mathcal{L}=-\sum^{2n}_{j=1}X^{2}_{j}&=-\sum^{2n}_{j=1}\left(\frac{\partial}{\partial
z_{j}}+\frac{1}{2}\sum^{m}_{k=1}\left(
\sum_{l=1}^{2n}U^{(k)}_{l,j}z_{l}\right)\frac{\partial}{\partial
w_{k}}\right)^{2}\\
&=-\Delta_{z}-\frac{1}{4}|z|^{2}\Delta_{w}-\sum^{m}_{k=1}\langle z,U^{(k)}\nabla_{z}\rangle\frac{\partial}{\partial
w_{k}},
\end{split}
\end{equation}
where
$$\Delta_{z}=\sum^{2n}_{j=1}\left(\frac{\partial}{\partial
z_{j}}\right)^{2},\;\;\Delta_{w}=\sum^{m}_{k=1}\left(\frac{\partial}{\partial
w_{k}}\right)^{2},\;\; \nabla_{z}=\left(\frac{\partial}{\partial
z_{1}},\cdots,\frac{\partial}{\partial z_{2n}}\right).$$
The nonisotropic dilation $\delta_{\mu}$ on $G$ is
\begin{align}\label{dilation}
 \delta_{\mu}(z,w)=(\mu z,\mu^{2}w),\;\;\mu>0.
\end{align}
It is easy to check, for $f\in C_{0}^{\infty}(G)$,
\begin{align*}
\mathcal{L}f(\delta_{\mu}(z,w))=&\mu^{2}(\mathcal{L}f)(\delta_{\mu}(z,w));\\
\Delta_{w}f(\delta_{\mu}(z,w))=&\mu^{4}(\Delta_{w}f)(\delta_{\mu}(z,w)).
\end{align*}
Therefore, by the definition of $\mathcal{L}_{s}$ (see \eqref{1.5}), we obtain
\begin{align}\label{2.2}
\mathcal{L}_{s}f(\delta_{\mu}(z,w))=&\mu^{2s}(\mathcal{L}_{s}f)(\delta_{\mu}(z,w)).
\end{align}

\subsection{Representation theory on H-type groups}

In this subsection we shall review  the representation theory on  H-type groups (see e.g. \cite{acbs}).
 We remark that it is similar to that on the Heisenberg group (see \cite{fo,tha1,tha2}).
 For more representation theory on nilpotent groups, we refer to \cite{cor,kiri}.

 Let $\mathfrak{z}^{\ast}$ be the dual of $\mathfrak z$.
  Fix $\lambda\in \mathfrak
z^{\ast}$ and define the skew symmetric
linear mapping $B(\lambda)$ on $\mathfrak z^{\perp}$ by
$$\langle B(\lambda)X,Y\rangle=\lambda([X,Y]),\;\;\;\;\forall X,Y\in\mathfrak z.$$
Let  $z_{\lambda}$ be the unique element of $\lambda\in \mathfrak
z^{\ast}$ such that
$$\langle B(\lambda)X,Y\rangle =\lambda([X,Y])=\langle J_{z_{\lambda}}(X),Y\rangle .$$
Denote the kernel of $B(\lambda)$ by $\mathfrak r_{\lambda}$ and let
$\mathfrak m_{\lambda}$ be the orthogonal complement of $\mathfrak
r_{\lambda}$ in $\mathfrak z^{\perp}$.  Denote by $\Lambda$
the Zariski-open subset of $\mathfrak z^{\ast}$ of the vectors
$\lambda$ for which $\dim\mathfrak m_{\lambda}$ is maximum.
Since $G$ is an H-type group, we have
$
\dim\mathfrak m_{\lambda}=\dim \mathfrak
z^{\perp}=2n,
$
 $\mathfrak r_{\lambda}=\{0\}$ for all $\lambda\in
\Lambda$ and $\Lambda=\mathfrak{z}^{\ast}\backslash \{0\}$.

For
$\lambda,\lambda'\in \mathfrak{z}^{\ast}\backslash \{0\}$, we put
\begin{align*}
\langle\lambda,\lambda'\rangle=\langle z_{\lambda},z_{\lambda'} \rangle,\;\; |\lambda|=\sqrt{\langle \lambda,\lambda\rangle}.
\end{align*}
We denote by
$\textrm{Sym}_{m}B(\lambda)$ the symmetric function of degree $2n$
in the roots of $B(\lambda)$.
Fixing $\lambda \in \mathfrak{z}^{\ast}\backslash \{0\}$, there are orthogonal vectors
$E_{1}(\lambda),\cdots,E_{n}(\lambda)$,
$\overline{E}_{1}(\lambda),\cdots,\overline{E}_{n}(\lambda)$ in
$\mathfrak m_{\lambda}$ such that
$$B(\lambda)E_{j}(\lambda)=|\lambda|\overline{E}_{j}(\lambda)\;\;\;\;\textrm{and}\;\;\;\;B(\lambda)\overline{E}_{j}(\lambda)=-|\lambda|E_{j}(\lambda),$$
and $\textrm{Sym}_{2n}B(\lambda)=|\lambda|^{2n}$.

Denote by
\begin{align*}
\mathfrak x_{\lambda}=\textrm{span}\{E_{1}(\lambda),\cdots,E_{n}(\lambda)\},\;\; \mathfrak y_{\lambda}=\textrm{span}\{\overline{E}_{1}(\lambda),\cdots,\overline{E}_{n}(\lambda)\}.
\end{align*}
Then we may write $V$ in $\mathfrak z^{\perp}$ as
$X+Y$ with $X\in\mathfrak x_{\lambda}$ and $Y\in\mathfrak
y_{\lambda}$.
The irreducible unitary representations parameterized by $\lambda\in
\mathfrak{z}^{\ast}\backslash \{0\}$ may be described as follows:
$$\left(\pi_{\lambda}(x,y,w)\phi\right)(y')=e^{i\sum^{m}_{j=1}\lambda_{j}w_{j}+i|\lambda|\sum^{n}_{j=1}(x_{j}\xi_{j}+\frac{1}{2}x_{j}y_{j})}\phi(y'+y),\;
x\in \mathfrak x_{\lambda}, \; y,y'\in \mathfrak y_{\lambda} $$
for all $\phi\in L^{2}(\mathfrak y_{\lambda})$. For $f\in L^{1}(G)$
and $\lambda\in \mathfrak{z}^{\ast}\backslash \{0\}$, its Fourier transform
$\widehat{f}(\lambda)$ is the operator-valued function defined by
$$\widehat{f}(\lambda)=\int_{\mathfrak z}\int_{\mathfrak y_{\lambda}}\int_{\mathfrak x_{\lambda}}f(x,y,w)\pi_{\lambda}(x,y,w)\md x\md y\md w,$$
which means that for each $\phi,\psi\in L^{2}(\mathfrak
y_{\lambda})$,
$$(\widehat{f}(\lambda)\phi,\psi)=\int_{\mathfrak z}\int_{\mathfrak y_{\lambda}}\int_{\mathfrak
x_{\lambda}}f(x,y,w)(\pi_{\lambda}(x,y,w)\phi,\psi)\md x\md y\md w.$$

The representation $\pi_{\lambda}$ of $G$ determines a
representation $\pi^{\ast}_{\lambda}$ of its Lie algebra $\mathfrak
g$ on the space of $C^{\infty}$ vectors. Recall that $\phi$ is said to
be a $C^{\infty}$ vector for the representation $\pi_{\lambda}$ if
$(x,y,w)\mapsto \pi_{\lambda}(x,y,w)\phi$ is a $C^{\infty}$ function from $G$
into the Hilbert space. The representation $\pi^{\ast}_{\lambda}$ is
defined by
$$\pi^{\ast}_{\lambda}(X)\phi=\left.\frac{\md}{\md t}\pi_{\lambda}(\exp tX)\phi\right|_{t=0}$$
for every $X$ in the Lie algebra $\mathfrak g$. We can then extend
$\pi^{\ast}_{\lambda}$ to the universal enveloping algebra of left-invariant differential operators on $G$. If $A$ is any such
operator, then a simple computation shows that
$$A(\pi_{\lambda}(x,y,w)\phi,\psi)=(\pi_{\lambda}(x,y,w)\pi^{\ast}_{\lambda}(A)\phi,\psi).$$
Therefore, one way of obtaining entry functions that are
eigenfunctions of $A$ is to take $\phi$ to be an eigenfunction of the  operator
$\pi^{\ast}_{\lambda}(A)$. One can calculate
$$H(\lambda)=\pi^{\ast}_{\lambda}(\mathcal{L})=-\sum^{n}_{j=1}\frac{\partial^{2}}{\partial\xi^{2}_{j}}+
|\lambda|^{2}|\xi|^{2},$$
which is nothing but  the Hermite operator in $\R^n$. The eigenfunctions
of $H(\lambda)$ are given by
$$\Phi^{\lambda}_{\alpha}(\xi)=|\lambda|^{\frac{n}{4}}\Phi_{\alpha}(\sqrt{|\lambda|}\xi),\;\;\alpha=(\alpha_{1},\cdots,\alpha_{n})\in \mathbb{N}^n,$$
 where $\Phi_{\alpha}(\xi)$ is the product
$\psi_{\alpha_{1}}(\xi_{1})\cdots \psi_{\alpha_{n}}(\xi_{n})$ and
$\psi_{\alpha_{j}}(\xi_{j})(j=1,\cdots,n)$ is the eigenfunction of
$- \frac{\partial^{2}}{\partial\xi^{2}_{j}}+\xi_{j}^{2}$ with
eigenvalue $2\alpha_{j}+1$ (see \cite{fo,tha1,tha2}).  Noticing  that
$$H(\lambda)\Phi^{\lambda}_{\alpha}=(2|\alpha|+n)|\lambda|\Phi^{\lambda}_{\alpha},\;\;|\alpha|=\alpha_{1}+\cdots+\alpha_{n},$$
we obtain
\begin{align}\label{2.3}
\mathcal{L}(\pi_{\lambda}(x,y,w)\Phi^{\lambda}_{\alpha},\Phi^{\lambda}_{\beta})=(2|\alpha|+n)|\lambda|(\pi_{\lambda}(x,y,w)\Phi^{\lambda}_{\alpha},\Phi^{\lambda}_{\beta}).
\end{align}
Thus the entry functions
$(\pi_{\lambda}(x,y,w)\Phi^{\lambda}_{\alpha},\Phi^{\lambda}_{\beta})$
as $\alpha,\beta$ ranging over $\mathbb{N}^{n}$ give a family of
eigenfunctions for the sublaplacian.

Let $P_k(\lambda)$ be the orthogonal projection defined by
\begin{align*}
    P_k(\lambda)\phi=\sum_{|\alpha|=k}(\phi,\Phi_{\alpha}^\lambda)\Phi_{\alpha}^\lambda,
\end{align*}
where $\alpha\in \N^n$ and $\phi\in L^2(\mathfrak y_{\lambda})$. By using  \eqref{2.3}, we have
\begin{align*}%\label{2.4}
(\mathcal{L}f)^{\widehat{}}=\widehat{f}(\lambda)\sum_{k=0}^\infty (n+2k)|\lambda|P_{k}(\lambda),\;\; f\in C_{0}^{\infty}(G).
\end{align*}
On the other hand, $(-\Delta_{w}f)^{\widehat{}}=|\lambda|^{2}\widehat{f}(\lambda)$.
Therefore,  the fractional powers $\mathcal{L}_{s}$ can be defined as follows:
\begin{align}\label{2.5}
    (\mathcal{L}_s f)^{\widehat{}}=\widehat{f}(\lambda)\sum_{k=0}^\infty (2|\lambda|)^{s}\frac{\Gamma (\frac{2k+n+1+s}{2})}{\Gamma(\frac{2k+n+1-s}{2})}P_{k}(\lambda),\;
    s\in \mathbb{R},\;f\in C_{0}^{\infty}(G),
\end{align}
provided the right side of \eqref{2.5} exists. Obviously,
\begin{align*}
\mathcal{L}_{s}^{-1}=\mathcal{L}_{-s}.
\end{align*}

\subsection{Green's function of $\mathcal {L}_{s}\;(0<s<n+1)$}
Let
\begin{align*}
    \varphi_{s,\rho}(z,w)=  \left( \left(\rho +\frac{|z|^2}4 \right)^2 +|w|^2  \right)^{-\frac{Q+2s}{4}},\;\; \rho>0.
\end{align*}
It has been shown by  Roncal and Thangavelu (see \cite[Theorem 3.7]{ron2})  (see also \cite{ga2} for $0<s<1$) that
\begin{align}\label{L_s varphi_-s rho}
    \mathcal{L}_s\varphi_{-s,\rho}(z,w)=(4\rho)^s\frac{\Gamma(\frac{n+1+s}{2})\Gamma(\frac{n+m+s}{2})}{\Gamma(\frac{n+1-s}{2})\Gamma(\frac{n+m-s}{2})} \varphi_{s,\rho}(z,w),\;\;0<s<n+1.
\end{align}
Furthermore, we have the following lemma (see \cite[Proposition 3.2]{ron1} for the case of Heisenberg group):
\begin{lemma}\label{lm2.1}
    Let $0<s<n+1$.  We have
    \begin{align}\label{2.6}
        \widehat{\varphi_{s,\rho}}(\lambda)=\sum_{k=0}^\infty c_{k,\rho}^\lambda(s)P_k(\lambda),
    \end{align}
    where
    \begin{equation}\label{ckrho}
       c_{k,\rho}^\lambda(s)= \frac{2^{n+1}\pi^{n+\frac{m+1}{2}}|\lambda|^s}{\Gamma(\frac{n+1+s}{2})\Gamma(\frac{n+m+s}{2})}
         L\left(\rho|\lambda|,\frac{2k+n+1+s}{2},\frac{2k+n+1-s}{2}\right)
    \end{equation}
  and the function $L(a,b,c)$ is given by
    \begin{align*}
          L(a,b,c)=\int_0^\infty e^{-a(2x+1)}x^{b-1}(1+x)^{-c}\md x.
    \end{align*}
\end{lemma}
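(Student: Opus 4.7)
The plan is to evaluate the operator $\widehat{\varphi_{s,\rho}}(\lambda)$ directly from the definition, exploiting the full radial symmetry of $\varphi_{s,\rho}$ to reduce it to a scalar on each eigenspace of the Hermite operator $H(\lambda)$, and then massage the resulting one-dimensional integral into the shape of $L(a,b,c)$.

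First, because $\varphi_{s,\rho}(z,w)$ depends only on $|z|^2$ and $|w|^2$, the operator $\widehat{\varphi_{s,\rho}}(\lambda)$ commutes with $H(\lambda)=\pi_\lambda^*(\mathcal L)$ and hence acts as a scalar $c_{k,\rho}^\lambda(s)$ on each eigenspace $P_k(\lambda)L^2(\mathfrak y_\lambda)$, which establishes \eqref{2.6}. To identify the scalar, I combine this with the standard Laguerre reproducing identity
\[
\sum_{|\alpha|=k}\bigl(\pi_\lambda(z,w)\Phi_\alpha^\lambda,\Phi_\alpha^\lambda\bigr)=e^{i\lambda\cdot w}\,L_k^{n-1}\!\left(\tfrac{|\lambda||z|^2}{2}\right)e^{-|\lambda||z|^2/4},
\]
and divide by the multiplicity $d_k=\binom{n+k-1}{k}=L_k^{n-1}(0)$ to obtain
\[
c_{k,\rho}^\lambda(s)=\frac{1}{d_k}\int_G\varphi_{s,\rho}(z,w)\,e^{i\lambda\cdot w}\,L_k^{n-1}\!\left(\tfrac{|\lambda||z|^2}{2}\right)e^{-|\lambda||z|^2/4}\,\md z\,\md w.
\]

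Next, I would perform the integration in $w$ first via the classical Bessel formula
\[
\int_{\R^m}\frac{e^{i\lambda\cdot w}}{(A^2+|w|^2)^\gamma}\,\md w=\frac{2\pi^{m/2}}{\Gamma(\gamma)}\Bigl(\frac{|\lambda|}{2A}\Bigr)^{\gamma-m/2}K_{\gamma-m/2}(A|\lambda|),
\]
applied with $A=\rho+|z|^2/4$ and $\gamma=(n+m+s)/2$. Passing to radial coordinates in $z$ and setting $r=|z|^2$ reduces the problem to a one-dimensional integral involving $K_{(n+s)/2}((\rho+r/4)|\lambda|)$, the Gaussian weight $e^{-|\lambda|r/4}$, and the Laguerre polynomial $L_k^{n-1}(|\lambda|r/2)$. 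To linearize the Bessel factor, I would substitute the integral representation
\[
K_\nu(z)=\tfrac{1}{2}\bigl(\tfrac{z}{2}\bigr)^\nu\int_0^\infty e^{-u-z^2/(4u)}u^{-\nu-1}\,\md u
\]
and interchange the orders of integration, which is justified by absolute convergence since $K_\nu$ decays exponentially, $L_k^{n-1}$ grows at most polynomially, and $e^{-|\lambda|r/4}$ controls the $r$-tail. The inner $r$-integral then becomes of pure Laplace--Laguerre type, and the classical identity
\[
\int_0^\infty e^{-pr}\,L_k^{n-1}(qr)\,r^{n-1}\,\md r=\frac{\Gamma(n+k)}{k!}\frac{(p-q)^k}{p^{n+k}},
\]
valid for $p>\max(q,0)$, produces a closed form in which only the auxiliary variable $u$ survives.

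Finally, an affine change of variable of the form $x=u/(\rho|\lambda|)-\tfrac{1}{2}$ (or a closely related substitution) brings the residual $u$-integral to the shape of $L\bigl(\rho|\lambda|,\tfrac{2k+n+1+s}{2},\tfrac{2k+n+1-s}{2}\bigr)$ from the statement. The main obstacle is the bookkeeping in this concluding step: one must carefully track every power of $|\lambda|$, $\rho$, and $2$, collect the $\Gamma$-factors produced by each substitution, and confirm that the exponents in the integrand land on $b-1=(2k+n-1+s)/2$ and $c=(2k+n+1-s)/2$. Once these exponents line up, collecting the constants accumulated at each step forces the overall prefactor to equal $2^{n+1}\pi^{n+(m+1)/2}|\lambda|^s/[\Gamma(\tfrac{n+1+s}{2})\Gamma(\tfrac{n+m+s}{2})]$, which is precisely \eqref{ckrho}.
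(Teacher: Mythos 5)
Your strategy is genuinely different from the paper's. The paper writes $w=(w_1,w')\in\R\times\R^{m-1}$, integrates out the $(m-1)$ extra central variables with the Beta integral $\int_{\R^{m-1}}(1+|w'|^2)^{-a}\md w'$, and then quotes the known Heisenberg-group formula from Roncal--Thangavelu \cite[Proposition~3.2]{ron1} for the remaining $(z,w_1)$-integral. You instead propose a self-contained computation: diagonalize $\widehat{\varphi_{s,\rho}}(\lambda)$ against $P_k(\lambda)$ by radial invariance, extract $c_{k,\rho}^\lambda(s)$ from the Laguerre trace identity divided by the multiplicity $d_k=\Gamma(n+k)/(k!\Gamma(n))$, integrate out all $m$ central variables to produce a Macdonald function, and then combine a Bessel integral representation with the Laplace--Laguerre formula. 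The overall architecture is sound, the trace/multiplicity extraction is correct, and the final exponent bookkeeping does land on $b-1$ and $c$ as you predict.

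However, there is a concrete flaw at the Bessel step. The representation you chose,
\begin{align*}
K_\nu(z)=\tfrac{1}{2}\Bigl(\tfrac{z}{2}\Bigr)^\nu\int_0^\infty e^{-u-z^2/(4u)}\,u^{-\nu-1}\,\md u,
\end{align*}
does \emph{not} linearize the dependence on $r=|z|^2$: with $z=(\rho+r/4)|\lambda|$ the exponent contains $(\rho+r/4)^2|\lambda|^2/(4u)$, which is quadratic in $r$, so after substituting you get a Gaussian factor $e^{-|\lambda|^2 r^2/(64u)}$ in the $r$-integral, and the Laplace--Laguerre identity $\int_0^\infty e^{-pr}L_k^{n-1}(qr)r^{n-1}\,\md r=\Gamma(n+k)(p-q)^k/(k!\,p^{n+k})$ no longer applies. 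The step you describe as ``of pure Laplace--Laguerre type'' is not. The fix is to use instead
\begin{align*}
K_\nu(z)=\frac{\sqrt{\pi}\,(z/2)^\nu}{\Gamma(\nu+\tfrac12)}\int_1^\infty e^{-zt}\,(t^2-1)^{\nu-1/2}\,\md t,
\end{align*}
whose exponent is linear in $z$ and hence in $r$. With this choice, the prefactor $A^\nu$ cancels against $A^{-(\gamma-m/2)}$ from the Fourier--Bessel formula, the $r$-integral becomes a genuine Laplace transform with $p=|\lambda|(1+t)/4$, $q=|\lambda|/2$, so $p-q=|\lambda|(t-1)/4$, and the substitution $t=2x+1$ turns the remaining $t$-integral into $2^s\,L\bigl(\rho|\lambda|,\tfrac{2k+n+1+s}{2},\tfrac{2k+n+1-s}{2}\bigr)$, exactly matching \eqref{ckrho} once the factors of $2$, $\pi$, $|\lambda|$, and the $\Gamma$-terms are collected. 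So your plan goes through, but only after replacing the specific $K_\nu$ representation you named.
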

\begin{proof}
 Since $\varphi_{s,\rho}(z,w)$ is radial in the $w$ variable,  we have
 \begin{align*}
      \widehat{\varphi_{s,\rho}}(\lambda)= \widehat{\varphi_{s,\rho}}(|\lambda|e_1),
 \end{align*}
where $e_1=(1,0,\cdots,0)\in \R^m$. Writing  $w=(w_1,w')\in \R\times \R^{m-1}$,  we have
\begin{align}\nonumber
   & \widehat{\varphi_{s,\rho}}(|\lambda|e_1)\\ \nonumber
   =&\int_{\R^{2n}}\int_{\R}\int_{\R^{m-1}} \left[ \left(\rho +\frac{|z|^2}{4}  \right)^2 +|w|^2  \right]^{-\frac{Q+2s}{4}}\pi_{|\lambda|e_1}(z,w)\md z\md w\\
   =&\int_{\R^{2n}}\int_{\R}\left\{\int_{\R^{m-1}} \left[ \left(\rho +\frac{|z|^2}{4}  \right)^2 +|w|^2  \right]^{-\frac{Q+2s}{4}}\md w'\right\}\pi_{|\lambda|}(z,w_1)\md z\md w_1,
   \label{2.7}
\end{align}
where $\pi_{|\lambda|}(z,w_1)=\pi_{|\lambda|}(x,y,w_1)$ is defined by
\begin{align*}
    (\pi_{|\lambda|}(x,y,w_1)\phi)(y')= e^{i |\lambda| w_1+i|\lambda|\sum_{j=1}^n(x_jy'_j+\frac{1}{2}x_jy_j)}\phi(y'+y),\; \phi\in L^2 (\R^n),
\end{align*}
 which is nothing but the irreducible unitary representation on $\mathbb{H}^{n}$.

We compute
\begin{align*}
&\int_{\R^{m-1}} \left[ \left(\rho +\frac{|z|^2}{4}  \right)^2 +|w|^2  \right]^{-\frac{Q+2s}{4}}\md w'\\
=&\left[ \left(\rho +\frac{|z|^2}{4}  \right)^2 +|w_1|^2  \right]^{-\frac{n+1+s}{2}}\int_{\R^{m-1}} \left(1 +|w'|^2  \right)^{-\frac{Q+2s}{4}}\md w'.
\end{align*}
By using (see \cite[(4.51)]{gv1})
\begin{align}\label{b2.16}
    \int_{\R^k} (1+|x|^2)^{-a}\md x=\pi^{k/2}\frac{\Gamma(a-k/2)}{\Gamma(a)}, \quad a>\frac{k}{2},
\end{align}
we obtain
\begin{align}\nonumber
&\int_{\R^{m-1}} \left[ \left(\rho +\frac{|z|^2}{4}  \right)^2 +|w|^2  \right]^{-\frac{Q+2s}{4}}\md w'\\
\label{2.8}
=&\pi^{\frac{m-1}{2}}\frac{\Gamma(\frac{n+1+s}{2})}{\Gamma(\frac{n+m+s}{2})}\left[ \left(\rho +\frac{|z|^2}{4}  \right)^2 +|w_1|^2  \right]^{-\frac{n+1+s}{2}}.
\end{align}
Substituting \eqref{2.8} into \eqref{2.7}, we obtain
\begin{align*}
\widehat{\varphi_{s,\rho}}(\lambda)=&
\pi^{\frac{m-1}{2}}\frac{\Gamma(\frac{n+1+s}{2})}{\Gamma(\frac{n+m+s}{2})}
\int_{\R^{2n}}\int_{\R}\left[ \left(\rho +\frac{|z|^2}{4}  \right)^2 +|w_1|^2  \right]^{-\frac{n+1+s}{2}}\pi_{|\lambda|}(z,w_1)\md z\md w_1\\
=&\sum_{k=0}^\infty c_{k,\rho}^\lambda(s)P_k(\lambda).
\end{align*}
To get the last equality, we use the fact
(see \cite[Proposition 3.2]{ron1})
\begin{align*}
    &\int_{\R^{2n}}\int_{\R} \left[\left(\rho+\frac{|z|^2}{4}  \right)^2+w_1^2 \right]^{-\frac{n+1+s}{2}}\pi_{|\lambda|}(z,w_1)\md z\md w_1\\
    =&\sum_{k=0}^\infty  \frac{(2\pi)^{n+1}|\lambda|^s}{\Gamma^2(\frac{n+1+s}{2})}L\left(\rho|\lambda|,\frac{2k+n+1+s}{2},\frac{2k+n+1-s}{2}\right) P_k(\lambda).
\end{align*}
This completes the proof of Lemma \ref{lm2.1}.
\end{proof}

Now we can compute the Green's function of $\mathcal {L}_{s}\; (0<s<n+1)$ (see \cite[(3.10)]{ron1} for the case of Heisenberg group and \cite[Theorem 1.2]{ga1} for $0<s<1$).
\begin{lemma}\label{lm2.2}
    Let $0<s<n+1$.  We have
    \begin{align*}
        \mathcal{L}_{s}^{-1}f=\mathcal{L}_{-s}f=c_{n,m,s}f\ast|(z,w)|^{-Q+2s},\;\; f\in C_{0}^{\infty}(G),
    \end{align*}
    in the sense of
 \begin{align*}%\label{L_{-s} f=}
   (\mathcal{L}_s^{-1}f)^{\widehat{}}(\lambda)= c_{n,m,s}(f*|(z,w)|^{-Q+2s})^{\widehat{}}(\lambda),
\end{align*}
   where $ c_{n,m,s}$ is given in \eqref{c_nms}
    and  $\ast$ is the convolution on  $G$ defined by
    \begin{align*}
    f\ast g(\xi)=\int_G f( \xi\circ \eta^{-1})g(\eta)\md\eta.
\end{align*}

\end{lemma}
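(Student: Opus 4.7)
The plan is to identify both sides of the claimed identity through their group-Fourier symbols and then invoke the convolution theorem on $G$.

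First I would observe that
\[
|(z,w)|^{-(Q-2s)}=\left(\frac{|z|^4}{16}+|w|^2\right)^{-(Q-2s)/4}=\varphi_{-s,0}(z,w),
\]
so the candidate kernel is the $\rho=0$ specialization of $\varphi_{-s,\rho}$. The derivation of Lemma \ref{lm2.1} only uses the $w$-radial symmetry of $\varphi_{s,\rho}$, the integrability of $(1+|w'|^2)^{-(Q-2s)/4}$ on $\R^{m-1}$ (which requires $s<n+1$), and the scalar Heisenberg identity of \cite[Proposition 3.2]{ron1}. All three ingredients remain valid with $s$ replaced by $-s$ in the range $0<s<n+1$, so the very same argument yields
\[
\widehat{\varphi_{-s,\rho}}(\lambda)=\frac{2^{n+1}\pi^{n+\frac{m+1}{2}}|\lambda|^{-s}}{\Gamma(\frac{n+1-s}{2})\Gamma(\frac{n+m-s}{2})}\sum_{k=0}^{\infty}L\!\left(\rho|\lambda|,\tfrac{2k+n+1-s}{2},\tfrac{2k+n+1+s}{2}\right)P_k(\lambda).
\]

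The decisive step is then to set $\rho=0$: the integral collapses to an Euler beta function,
\[
L(0,b,c)=\int_0^\infty x^{b-1}(1+x)^{-c}\,\md x=\frac{\Gamma(b)\Gamma(c-b)}{\Gamma(c)},
\]
which converges precisely because $c-b=s>0$ and $b=(2k+n+1-s)/2>0$ for every $k\ge 0$ under the standing hypothesis $0<s<n+1$. Substituting this evaluation and regrouping constants according to \eqref{c_nms} gives
\[
\widehat{|(z,w)|^{-(Q-2s)}}(\lambda)=c_{n,m,s}^{-1}\sum_{k=0}^{\infty}(2|\lambda|)^{-s}\frac{\Gamma(\frac{2k+n+1-s}{2})}{\Gamma(\frac{2k+n+1+s}{2})}P_k(\lambda),
\]
which by the spectral formula \eqref{2.5} is exactly $c_{n,m,s}^{-1}$ times the Fourier symbol of $\mathcal{L}_{-s}$. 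Applying the convolution identity $(f*g)^{\widehat{}}(\lambda)=\widehat{f}(\lambda)\widehat{g}(\lambda)$ on $G$ to the left-hand side $c_{n,m,s}\,f*|(z,w)|^{-(Q-2s)}$ then matches its symbol with that of $\mathcal{L}_{-s}f$, giving the asserted identity.

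The one point that needs care is that $|(z,w)|^{-(Q-2s)}\notin L^1(G)$, so its group Fourier transform must be read in a distributional sense. I would handle this by pairing the symbol identity with $\widehat{f}(\lambda)$ for $f\in C_0^\infty(G)$: since $Q-2s<Q$ ensures local integrability of the kernel, $f*|(z,w)|^{-(Q-2s)}$ is an honest smooth function, and the passage $\rho\to 0$ inside $\widehat{f}(\lambda)\widehat{\varphi_{-s,\rho}}(\lambda)$ is then justified by dominated convergence on each $P_k(\lambda)$-block after expanding $\widehat{f}(\lambda)$ in the Hermite basis $\{\Phi^\lambda_\alpha\}$. This is the only delicate ingredient; everything else is a direct bookkeeping computation.
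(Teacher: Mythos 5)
Your proposal is correct and follows essentially the same route as the paper: replace $s$ by $-s$ in Lemma~\ref{lm2.1} (the paper phrases this as analytic continuation), pass to $\rho=0$ in $L(\rho|\lambda|,b,c)$ to recover the beta integral $\Gamma(b)\Gamma(c-b)/\Gamma(c)$, and match symbols via the convolution theorem. The only cosmetic difference is that the paper justifies the $\rho\to 0$ passage by an explicit uniform-in-$k$ estimate on $L$, whereas you appeal to dominated convergence block-by-block after pairing with $\widehat{f}(\lambda)$; both settle the same delicate point.
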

\begin{proof}
By analytic continuation of $s$, \eqref{2.6} is also valid for $-n-1<s<0$. Therefore,
\begin{align*}
(f\ast\varphi_{-s,\rho})^{\widehat{}}(\lambda)=&\widehat{f}(\lambda) \widehat{\varphi_{-s,\rho}}(\lambda)\\
 =&\frac{2^{n+1+s}\pi^{n+\frac{m+1}{2}}(2|\lambda|)^{-s}}{\Gamma(\frac{n+1-s}{2})\Gamma(\frac{n+m-s}{2})}\times\\
&\widehat{f}(\lambda)\sum_{k=0}^\infty L\left(\rho|\lambda|,\frac{2k+n+1-s}{2},\frac{2k+n+1+s}{2}\right)P_k(\lambda).
\end{align*}
It is easy to verify that
\begin{align*}
  &e^{-\rho|\lambda|}\int_0^\infty (e^{-2\rho|\lambda| x}-1)x^{\frac{2k+n+1-s}{2}-1}(1+x)^{-\frac{2k+n+1+s}{2}}\md x \to 0, &\rho\to 0;\\
    &(e^{-\rho|\lambda|}-1)\int_0^\infty x^{\frac{2k+n+1-s}{2}-1}(1+x)^{-\frac{2k+n+1+s}{2}}\md x \to 0, &\rho\to 0
\end{align*}
  are both uniformly in $k$, so does
\begin{align*}
    & L\left(\rho|\lambda|,\frac{2k+n+1-s}{2},\frac{2k+n+1+s}{2}\right)\\
    =& \int_0^\infty e^{-\rho|\lambda|(2x+1)}x^{\frac{2k+n+1-s}{2}-1}(1+x)^{-\frac{2k+n+1+s}{2}}\md x\\
     \to & L\left(0,\frac{2k+n+1-s}{2},\frac{2k+n+1+s}{2}\right)\\
     =&\Gamma(s)\frac{\Gamma(\frac{2k+n+1-s}{2})}{\Gamma(\frac{2k+n+1+s}{2})}.
\end{align*}
Therefore, we have
\begin{align*}
   \lim_{\rho\to 0}(f\ast\varphi_{-s,\rho})^{\widehat{}}(\lambda)=
  \frac{2^{n+1+s}\pi^{n+\frac{m+1}{2}}\Gamma(s)}{\Gamma(\frac{n+1-s}{2})\Gamma(\frac{n+m-s}{2})}(\mathcal{L}_s^{-1}f)^{\widehat{}}(\lambda),
\end{align*}
which implies
\begin{align*}%\label{L_{-s} f=}
   (\mathcal{L}_s^{-1}f)^{\widehat{}}(\lambda)= \frac{\Gamma(\frac{n+1-s}{2})\Gamma(\frac{n+m-s}{2})}{2^{n+1+s} \pi^{n+\frac{m+1}{2}}\Gamma(s)}(f*|(z,w)|^{-Q+2s})^{\widehat{}}(\lambda).
\end{align*}
This completes the proof of Lemma \ref{lm2.2}.
\end{proof}
\section{Ground state representation of $\mathcal{L}_s$ and Hardy's inequalities}

In \cite[Lemma 5.1]{ron1}, Roncal and Thangavelu proved the following ground state representation of $\mathcal{L}_s$ on $\mathbb{H}^{n}$:
\begin{align}\label{a3.3}
\int_{\mathbb{H}^{n}} f(\xi)\mathcal{L}_s f(\xi)\md \xi
=C_{n,s}\int_{\mathbb{H}^{n}}\int_{\mathbb{H}^{n}} \frac{|f(\xi)-f(\eta)|^2}{|\eta^{-1}\circ\xi|^{Q+2s}}\md \xi \md \eta, f\in W^{s,2}(\mathbb{H}^{n}).
\end{align}
Using  the heat equation approach in \cite{ga1,ga2,ga3}, one sees that (\ref{a3.3}) can
be extended to any H-type group. For readers' convenience, we describe it as follows. It has been proved by Garofalo and Tralli (see \cite[Theorem 4.1]{ga3}) that the  heat kernel
with pole at the origin of the operator $\partial_{\rho\rho}+\frac{1-2s}{\rho}\partial_{\rho}+\frac{1}{4}\rho^2\Delta_w -\mathcal{L}-\partial_{t}$
for $0<s<1$
is given by
\begin{align*}
\mathfrak{q}_{(s)}((z,w),  t,\rho)=\frac{2^{m}}{(4\pi t)^{n+m+1-s}}\int_{\R^m}e^{-\frac{i}{t}\langle w ,\lambda\rangle} \left( \frac{|\lambda|}{\sinh |\lambda|} \right)^{n+1-s}e^{-\frac{|z|^2+\rho^{2}}{4t}\frac{|\lambda|}{\tanh |\lambda|}}\md \lambda.
\end{align*}
Set
\begin{align*}
K_s^t(z,w)=(4\pi t)^{1+s}\mathfrak{q}_{(-s)}((z,w),t, 0).
\end{align*}
By using (see \cite{ron2,ga2,ga3})
\begin{align*}
\mathcal{L}_s f(\xi)&=\frac{s}{\Gamma(1-s)} \int_0^\infty (f(\xi)-f*K_s^t(\xi))t^{-s-1}\md t
\end{align*}
and (see \cite[(1.17) and (1.19)]{ga2})
\begin{align*}
\int_0^\infty K_s^t (z,w) t^{-s-1}\md t=(4\pi)^{1+s} \int_{0}^{\infty}\mathfrak{q}_{(-s)}((z,w), t, 0)\md t= \frac{\Gamma(\frac{n+1+s}{2})\Gamma(\frac{n+m+s}{2})}{2^{n+1-s}\pi^{n+\frac{m+1}{2}}}|(z,w)|^{-Q-2s},
\end{align*}
one sees that
\begin{align}\nonumber
   \int_G f(\xi)\mathcal{L}_s f(\xi)\md \xi=&\frac{1}{2|\Gamma(-s)|}\int_{G}\int_0^\infty\int_G (f(\xi)-f(\eta)){f(\xi)} K_s^t(\eta^{-1}\circ\xi) t^{-s-1}\md t\md \xi\md \eta+\\
   \nonumber
  & \frac{1}{2|\Gamma(-s)|}\int_{G}\int_0^\infty\int_G (f(\eta)-f(\xi)){f(\eta)} K_s^t(\xi^{-1}\circ\eta) t^{-s-1}\md t \md \xi\md \eta\\
  \nonumber
   =&\frac{1}{2|\Gamma(-s)|}\int_G\int_G\int_0^\infty |f(\xi)-f(\eta)|^2 K_s^t(\eta^{-1}\circ\xi)t^{-s-1}\md t\md \xi\md \eta\\
   \label{representation L_s f,f}
    =&a_{n,m,s}\int_G\int_G \frac{|f(\xi)-f(\eta)|^2}{|\eta^{-1}\circ\xi|^{Q+2s}}\md \xi \md \eta,
\end{align}
    where
    \begin{align}\label{anms}
         a_{n,m,s}=2^{-n-2+s}\pi^{-n-\frac{m+1}{2}}\frac{\Gamma(\frac{n+1+s}{2})\Gamma(\frac{n+m+s}{2})}{|\Gamma(-s)|}.
    \end{align}
To get the second equality above, we use the fact    $K_s^t(\xi)=K_s^t(\xi^{-1})$.

By using the ground state representation technique given by  Frank, Lieb and Seiringer \cite{fl4} (see \cite[Theorem 5.2]{ron1} for the case of Heisenberg group), we can give another proof of
Hardy's  inequality associated with $\mathcal{L}_{s}$ (see  Roncal and Thangavelu  \cite[Corollary 1.4]{ron2}). We remark that
such technique also plays an important role in the proof of our main results (see Lemma \ref{L_s f w_i, fw_i}).
\begin{lemma}\label{lm3.2}
 Let $0<s<1$.    It holds that
    \begin{align}\label{Hardy-type U inequality}
   \int_G f\mathcal{L}_s f(z,w)\md z\md w  \geq    N_{n,m,s}\int_{G} \frac{|f(z,w)|^2}{[(1+\frac{|z|^2}{4})^2+|w|^2]^s}\md z\md w,\;\;\forall f\in W^{s,2}(G),
    \end{align}
    where
    \begin{equation}\label{N_nms}
        N_{n,m,s}=4^{s}
\frac{\Gamma(\frac{n+1+s}{2})\Gamma(\frac{n+m+s}{2})}{\Gamma(\frac{n+1-s}{2})\Gamma(\frac{n+m-s}{2})}
    \end{equation}
    and the equality holds if and only if
    $$f=c\left(\Big(1+\frac{|z|^2}{4}\Big)^2+|w|^2\right)^{-\frac{Q-2s}{4}},\;\; c\in\mathbb{R}.$$
\end{lemma}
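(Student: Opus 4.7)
The plan is to adapt the ground state representation technique of Frank, Lieb and Seiringer \cite{fl4} to our setting, relying on two ingredients already established in the excerpt. First, specializing \eqref{L_s varphi_-s rho} at $\rho=1$ gives the pointwise eigenvalue equation
\[
\mathcal{L}_s U(z,w) \,=\, N_{n,m,s}\,W(z,w)^{-s}\,U(z,w), \qquad W(z,w):=\Bigl(1+\tfrac{|z|^{2}}{4}\Bigr)^{2}+|w|^{2},
\]
since $U=\varphi_{-s,1}$ and $\varphi_{s,1}=W^{-s}U$. Second, \eqref{representation L_s f,f} realizes $\int_{G} f\,\mathcal{L}_s f$ as a symmetric double integral with kernel $|\eta^{-1}\circ\xi|^{-Q-2s}$; the usual symmetrization argument (already implicit in the derivation of \eqref{representation L_s f,f}) upgrades this to the principal-value pointwise formula
\[
\mathcal{L}_s f(\xi) \,=\, 2a_{n,m,s}\,\mathrm{P.V.}\!\int_{G}\frac{f(\xi)-f(\eta)}{|\eta^{-1}\circ\xi|^{Q+2s}}\,\md\eta.
\]

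Next I would write $f=Ug$ (the substitution is legal because $U>0$ everywhere) and insert this into \eqref{representation L_s f,f}, invoking the elementary algebraic identity
\[
|U(\xi)g(\xi)-U(\eta)g(\eta)|^{2} \,=\, U(\xi)U(\eta)\,|g(\xi)-g(\eta)|^{2} + \bigl(U(\xi)-U(\eta)\bigr)\bigl(U(\xi)|g(\xi)|^{2}-U(\eta)|g(\eta)|^{2}\bigr),
\]
verified by direct expansion. The first summand produces a manifestly non-negative double integral weighted by $U(\xi)U(\eta)$. For the second summand, the $\xi\leftrightarrow\eta$ symmetry of the kernel reduces its contribution to $2a_{n,m,s}\int_{G} U(\xi)|g(\xi)|^{2}\,\mathrm{P.V.}\!\int_{G}(U(\xi)-U(\eta))\,|\eta^{-1}\circ\xi|^{-Q-2s}\,\md\eta\,\md\xi$, which by the pointwise formula equals $\int_{G} |g|^{2}\,U\,\mathcal{L}_s U\,\md\xi = N_{n,m,s}\int_{G} U^{2}|g|^{2}/W^{s}\,\md\xi = N_{n,m,s}\int_{G} |f|^{2}/W^{s}\,\md\xi$.

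Collecting the two contributions yields the \emph{ground state representation}
\[
\int_{G} f\,\mathcal{L}_s f\,\md\xi \,=\, a_{n,m,s}\int_{G}\!\int_{G} \frac{U(\xi)U(\eta)\,|g(\xi)-g(\eta)|^{2}}{|\eta^{-1}\circ\xi|^{Q+2s}}\,\md\xi\,\md\eta \,+\, N_{n,m,s}\int_{G} \frac{|f|^{2}}{W^{s}}\,\md\xi,
\]
from which dropping the non-negative first term gives \eqref{Hardy-type U inequality}. Equality forces $g(\xi)=g(\eta)$ for almost every $(\xi,\eta)$, so $g$ is constant and $f=cU$, as required.

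The main technical obstacle is to justify the substitution $g=f/U$ together with the principal-value manipulations on a dense subclass of $W^{s,2}(G)$ (typically $C_{0}^{\infty}(G)$) and then pass to the limit in the $W^{s,2}$-norm. This is a routine adaptation of \cite[Theorem~5.2]{ron1} and \cite{fl4} to the H-type setting: the positivity and smoothness of $U$, together with the explicit polynomial growth of $W$ at infinity, provide the integrability needed to handle the P.V.\ integrals involving $U$ and to treat the cross term as a genuine duality pairing.
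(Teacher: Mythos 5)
Your proposal is correct and follows essentially the same route as the paper: the ground state representation derived by combining the quadratic form representation \eqref{representation L_s f,f} with the eigenvalue equation \eqref{L_s varphi_-s rho} at $\rho=1$. The algebraic identity you expand (with $f=Ug$) is equivalent, after the change of notation $h=|f|^2/U$, to the polarization step \eqref{3.9}--\eqref{3.10} in the paper; the only cosmetic difference is that you phrase the cross term via a pointwise principal-value formula plus symmetrization, whereas the paper stays at the level of bilinear forms and uses $\int_G g\,\mathcal{L}_s h=\int_G h\,\mathcal{L}_s g$ directly, which sidesteps having to justify the P.V.\ manipulation.
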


\begin{proof}
Set, for $\rho>0$,
\begin{align*}
    \mathcal{H}_s[f]=\int_G f(\xi)\mathcal{L}_s f(\xi)\md \xi-(4\rho)^s\frac{\Gamma(\frac{n+1+s}{2})\Gamma(\frac{n+m+s}{2})}{\Gamma(\frac{n+1-s}{2})\Gamma(\frac{n+m-s}{2})}\int_{G} \frac{|f(z,w)|^2}{[(\rho+\frac{|z|^2}{4})^2+|w|^2]^s}\md z\md w.
\end{align*}
We first show
  \begin{align}\label{3.7}
        \mathcal{H}_s[f]=a_{n,m,s}\int_G\int_G \frac{|G(\xi)-G(\eta)|^2}{|\eta^{-1}\circ\xi|^{Q+2s}}\varphi_{-s,\rho}(\xi)\varphi_{-s,\rho}(\eta)\md\xi\md\eta,
\end{align}
    where $G(\xi)=f(\xi)\varphi_{-s,\rho}(\xi)^{-1}$ and  $a_{n,m,s}$ is given  in \eqref{anms}.
    Without loss of generality, we   assume $f\in C_{0}^{\infty}(G)$.

By polarizing \eqref{representation L_s f,f}, we have, for any $f,g\in W^{s,2}(G)$,
\begin{align}\label{formula for Hs}
   \int_G g\mathcal{L}_s f\md \xi=a_{n,m,s}\int_G\int_G \frac{(f(\xi)-f(\eta))(g(\xi)-g(\eta))}{|\eta^{-1}\circ\xi|^{Q+2s}}\md \xi \md\eta.
\end{align}
By applying \eqref{formula for Hs} to $g(\xi)=\varphi_{-s,\rho}(\xi)$ and $h(\xi)=|f(\xi)|^2g(\xi)^{-1}$, we get
\begin{align}\nonumber
   \int_G g\mathcal{L}_s h\md \xi&= a_{n,m,s}\int_G\int_G \frac{(h(\xi)-h(\eta))(g(\xi)-g(\eta))}{|\eta^{-1}\circ\xi|^{Q+2s}}\md \xi \md\eta\\
\nonumber
    &=a_{n,m,s}\int_G\int_G\left( |f(\xi)-f(\eta)|^2-\left|\frac{f(\xi)}{g(\xi)}-\frac{f(\eta)}{g(\eta)}   \right|^2 g(\xi)g(\eta)\right)\frac{\md \xi\md \eta}{|\eta^{-1}\circ\xi|^{Q+2s}}\\
    &=\int_G f\mathcal{L}_s f\md \xi-a_{n,m,s}\int_G\int_G\left|\frac{f(\xi)}{g(\xi)}-\frac{f(\eta)}{g(\eta)}   \right|^2 \frac{g(\xi)g(\eta)}{|\eta^{-1}\circ\xi|^{Q+2s}}\md \xi\md \eta. \label{3.9}
\end{align}
On the other hand, by using \eqref{L_s varphi_-s rho}, we have
\begin{align}\label{3.10}
    \int_G g\mathcal{L}_sh\md \xi=\int_G h\mathcal{L}_sg\md \xi&=(4\rho)^s\frac{\Gamma(\frac{n+1+s}{2})\Gamma(\frac{n+m+s}{2})}{\Gamma(\frac{n+1-s}{2})\Gamma(\frac{n+m-s}{2})} \int_G \frac{|f(\xi)|^2}{\varphi_{-s,\rho}(\xi)}\varphi_{s,\rho}(\xi)\md \xi.
\end{align}
Substituting \eqref{3.10} into \eqref{3.9} and using
\begin{align*}
    \frac{\varphi_{s,\rho}}{\varphi_{-s,\rho}}=  \left[\left(\rho+\frac{|z|^2}{4}\right)^2+|w|^2\right]^{-s},
\end{align*}
we obtain \eqref{3.7}.
 The desired result follows by using \eqref{3.7}.

\end{proof}

\section{Proof of Theorem \ref{th1.2}}

By an   interpolation of   Hardy's inequality \eqref{Hardy-type U inequality} and fractional Sobolev inequalities (see \cite[Theorem 2]{ka})
\begin{align}\label{4.1}
\int_{G}u(\xi)\mathcal{L}_{s}u(\xi)\md\xi\geq C\left(\int_{G}|u(\xi)|^{\frac{2Q}{Q-2s}}\md\xi\right)^{\frac{Q-2s}{Q}}, \; \forall  u\in W^{s,2}(G),
\end{align}
we obtain the following fractional Sobolev inequalities with weights.
\begin{proposition}\label{co4.1}
Let $0<s<1$ and  $2\leq p<\frac{2Q}{Q-2s}$. There exists $C>0$ such that for each $f\in W^{s,2}(G)$,
\begin{align}\label{4.4}
\int_{G}f\mathcal{L}_s f\md z\md w \geq C\left(\int_{G}|f|^{p}U(z,w)^{\frac{2Q}{Q-2s}-p}\md z\md w\right)^{\frac{2}{p}},
\end{align}
where $U(z,w)$ is the one given in \eqref{1.7}.
\end{proposition}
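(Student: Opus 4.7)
The plan is to interpolate by Hölder's inequality between the Hardy-type bound of Lemma \ref{lm3.2} and the fractional Sobolev inequality \eqref{4.1}. Set $q=\frac{2Q}{Q-2s}$. Observing the identity
\[
U(z,w)^{q-2}=\left[\Big(1+\tfrac{|z|^2}{4}\Big)^2+|w|^2\right]^{-s},
\]
Lemma \ref{lm3.2} takes the form $\int_G f\mathcal{L}_s f\,\md z\md w\geq N_{n,m,s}\int_G|f|^2 U^{q-2}\,\md z\md w$, while \eqref{4.1} reads $\int_G f\mathcal{L}_s f\,\md z\md w\geq C_{0}\bigl(\int_G|f|^q\,\md z\md w\bigr)^{2/q}$. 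These are exactly the endpoint cases $p=2$ and $p=q$ of the claimed family \eqref{4.4}, so the proposition amounts to filling in the intermediate values of $p$.

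For $p\in[2,q)$ I would set $\theta=(q-p)/(q-2)\in(0,1]$. Since $2\theta+q(1-\theta)=p$ and $(q-2)\theta=q-p$, there is a pointwise factorization
\[
|f|^p U^{q-p}=\bigl(|f|^2 U^{q-2}\bigr)^{\theta}\bigl(|f|^q\bigr)^{1-\theta}.
\]
Hölder's inequality with conjugate exponents $1/\theta$ and $1/(1-\theta)$ then yields
\[
\int_G|f|^p U^{q-p}\,\md z\md w\leq \Bigl(\int_G|f|^2 U^{q-2}\,\md z\md w\Bigr)^{\theta}\Bigl(\int_G|f|^q\,\md z\md w\Bigr)^{1-\theta}.
\]

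Next I would insert Hardy's inequality into the first factor on the right and \eqref{4.1} into the second, producing, with $\mathcal{H}:=\int_G f\mathcal{L}_s f\,\md z\md w$, a bound of the form $C\,\mathcal{H}^{\theta+(1-\theta)q/2}$. A short arithmetic check using $q-2=\frac{4s}{Q-2s}$ shows
\[
\theta+(1-\theta)\frac{q}{2}=\frac{p}{2},
\]
so raising to the power $2/p$ produces exactly $\mathcal{H}$ on the right-hand side and establishes \eqref{4.4} with $C$ depending on $N_{n,m,s}$ and the Sobolev constant. There is no substantive obstacle in this argument; the only delicate point is verifying that the combined exponent of $\mathcal{H}$ comes out to precisely $p/2$ so that the final power $2/p$ closes the estimate cleanly, which is guaranteed by the specific choice of the weight $U^{q-p}$ in \eqref{4.4}.
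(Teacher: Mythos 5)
Your proof is correct and follows exactly the interpolation route the paper has in mind: the paper only sketches Proposition \ref{co4.1} as "an interpolation of Hardy's inequality and fractional Sobolev," and the same Hölder splitting with weight $U^{q-p}=(U^{q-2})^{\theta}$ appears explicitly in the proof of Lemma \ref{lm4.3} (with $\theta$ defined complementary to yours). The algebra you checked—$2\theta+q(1-\theta)=p$, $(q-2)\theta=q-p$, and $\theta+q(1-\theta)/2=p/2$—is exactly what makes the powers of $\mathcal{H}$ close up to $\mathcal{H}^{p/2}$ as required.
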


Next we shall show that the
embedding map
\begin{align}\label{a4.1}
W^{s,2}(G)\hookrightarrow L^{p}(G,
U(z,w)^{\frac{2Q}{Q-2s}-p}\md z\md w),\; 2\leq p<\frac{2Q}{Q-2s},
\end{align}
 is compact.

For simplicity, we define the ball centered at origin with radius $R$ by
\begin{align*}
 B_{R}(0)=\{(z,w)\in G: |(z,w)|<R\}.
\end{align*}
Set $\Sigma=\partial B_{1}(0)=\{(z,w)\in G: |(z,w)|=1\}$. We have the following polar coordinates on
$G$  (see \cite{fs3}):
\begin{align}\label{polar}
\int_{G}f(z,w)\md z\md w=
\int^{\infty}_{0}\int_{\Sigma}f(
\rho z^{\ast},\rho^{2}w^{\ast})\rho^{Q-1}\md\sigma \md\rho,\;\;f\in
L^{1}(G),
\end{align}
where  $z^{\ast}=\frac{z}{|(z,w)|}$ and $ w=\frac{w}{|(z,w)|^{2}}$.

Let $\phi : G\rightarrow [0,1]$ be a cut-off $C_{0}^{\infty}$ function
which  is equal to one in $B_{1}(0)$ and zero outside of $B_{2}(0)$.
For $f\in W^{s,2}(G)$, we let
\begin{align}\label{b4.5}
I_{R}(f)(\xi)=f(\xi)\phi\big(\delta_{\frac{1}{R}}(\xi)\big),\; \xi \in G.
\end{align}
By fractional Sobolev inequalities \eqref{4.1}, we have
\begin{align*}
I_{R}(f)\in L^{\frac{2Q}{Q-2s}}(B_{2R}(0))\subset L^{p}(B_{2R}(0)),\;\; 1\leq p<\frac{2Q}{Q-2s}.
\end{align*}

\begin{lemma}\label{lemma 4.2}
Let $0<s<1$ and $1\leq p<\frac{2Q}{Q-2s}$.
The embedding map
\[
I_{R}: W^{s,2}(G)\hookrightarrow
L^{p}(B_{2R}(0)),
\]
defined by \eqref{b4.5},  is compact.
\end{lemma}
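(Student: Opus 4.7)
The plan is to apply the Fr\'echet--Kolmogorov compactness criterion. Let $\{f_k\}$ be a bounded sequence in $W^{s,2}(G)$ and set $g_k := I_R(f_k)=f_k\cdot\phi_R$ where $\phi_R:=\phi\circ\delta_{1/R}$; each $g_k$ is supported in $\overline{B_{2R}(0)}$. Three main facts must be established, after which the result follows by standard arguments.

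First, $\{g_k\}$ is uniformly bounded in $L^{2Q/(Q-2s)}(G)$. This is immediate from the fractional Sobolev inequality \eqref{4.1} applied to $f_k$, together with $|\phi_R|\leq 1$. H\"older's inequality then yields uniform bounds in every $L^q(B_{2R}(0))$ for $1\leq q\leq 2Q/(Q-2s)$; in particular the ``smallness at infinity'' hypothesis of Fr\'echet--Kolmogorov is trivially satisfied, and by interpolation with the $L^{2Q/(Q-2s)}$ bound it suffices to prove precompactness in $L^2(B_{2R}(0))$.

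Second, $\{g_k\}$ is uniformly bounded in $W^{s,2}(G)$. Starting from the Gagliardo-type identity \eqref{representation L_s f,f} and the pointwise inequality
\begin{equation*}
|g_k(\xi)-g_k(\eta)|^2 \leq 2\phi_R(\xi)^2|f_k(\xi)-f_k(\eta)|^2 + 2|f_k(\eta)|^2|\phi_R(\xi)-\phi_R(\eta)|^2,
\end{equation*}
the first contribution is bounded by $\|f_k\|_{W^{s,2}}^2/a_{n,m,s}$, while the second is controlled via the Lipschitz bound $|\phi_R(\xi)-\phi_R(\eta)|\leq C_R\min\{1,|\eta^{-1}\circ\xi|\}$ and the uniform $L^{2Q/(Q-2s)}$ bound from the first step.

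Third (the crucial step), I would establish a uniform $L^2$ mollifier estimate. Let $\chi\in C_0^\infty(G)$ be nonnegative and symmetric (i.e.\ $\chi(\xi)=\chi(\xi^{-1})$), with $\int_G\chi\,\md\xi=1$ and $\chi\equiv 0$ outside $B_1(0)$, and set $\chi_r(\xi):=r^{-Q}\chi(\delta_{1/r}\xi)$. Rewriting \eqref{representation L_s f,f} via the change of variable $\zeta=\eta^{-1}\circ\xi$ gives
\begin{equation*}
\int_G \frac{\|g_k(\cdot\circ\zeta)-g_k\|_{L^2}^2}{|\zeta|^{Q+2s}}\md\zeta\leq a_{n,m,s}^{-1}\|g_k\|_{W^{s,2}}^2\leq C.
\end{equation*}
Applying Minkowski and then Jensen to $g_k*\chi_r-g_k=\int_G\chi_r(\zeta)(g_k(\cdot\circ\zeta)-g_k)\,\md\zeta$, then using $\chi_r\leq\|\chi\|_\infty r^{-Q}$ on $B_r(0)$ together with $|\zeta|\leq r$ on the support of $\chi_r$, yields
\begin{equation*}
\|g_k*\chi_r - g_k\|_{L^2}^2\leq \|\chi\|_\infty r^{-Q}\int_{B_r}\|g_k(\cdot\circ\zeta)-g_k\|_{L^2}^2\md\zeta\leq C' r^{2s},
\end{equation*}
uniformly in $k$.

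To conclude: for each fixed $r>0$, the family $\{g_k*\chi_r\}_k$ is uniformly bounded in $L^\infty$ and uniformly equicontinuous on $\overline{B_{2R+r}(0)}$ (using the smoothness of $\chi_r$ and the uniform $L^2$ bound on $g_k$), hence precompact in $L^2$ by Arzel\`a--Ascoli. A diagonal extraction as $r_j\downarrow 0$, combined with the mollifier estimate, produces a subsequence of $\{g_k\}$ that is Cauchy in $L^2(B_{2R}(0))$; interpolating with the uniform $L^{2Q/(Q-2s)}$ bound yields convergence in $L^p(B_{2R}(0))$ for all $1\leq p<2Q/(Q-2s)$. The principal technical hurdle is the third step: extracting a quantitative $L^2$ mollifier estimate $\|g*\chi_r-g\|_2\lesssim r^s\|g\|_{W^{s,2}}$ from the integrated Gagliardo bound. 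This is the mechanism that converts fractional differentiability into a uniform modulus of continuity in $L^2$, without which the Fr\'echet--Kolmogorov criterion would not apply.
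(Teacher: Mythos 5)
Your proposal is correct, but it takes a genuinely different route from the paper. The paper establishes the uniform bound on the Gagliardo seminorm of $I_R(u)$ over $B_{2R}(0)\times B_{2R}(0)$ (your second step is essentially their estimate \eqref{part3}, obtained by the same splitting $g(\xi)-g(\eta)=\phi_R(\xi)(f(\xi)-f(\eta))+f(\eta)(\phi_R(\xi)-\phi_R(\eta))$ together with the Taylor/Lipschitz bound for the cutoff and polar-coordinate control of the resulting singular integrals), and then \emph{cites} the Rellich--Kondrachov theorem for fractional Sobolev spaces on stratified Lie groups from Ghosh--Kumar--Ruzhansky \cite[Theorem 7]{gho}. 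You instead \emph{prove} that compactness from first principles via a Fr\'echet--Kolmogorov-type argument: the quantitative $\lVert g_k\ast\chi_r-g_k\rVert_{L^2}\lesssim r^{s}$ mollifier estimate extracted from the translated Gagliardo bound, followed by Arzel\`a--Ascoli for the mollified family and a diagonal extraction. The paper's route is shorter (one estimate plus a citation); yours is self-contained and makes the compactness mechanism explicit. One technical point to be careful about: your second step needs the Gagliardo seminorm of $g_k$ over all of $G\times G$, not just over $B_{2R}\times B_{2R}$, since the change of variables produces $\lVert g_k(\cdot\circ\zeta)-g_k\rVert_{L^2(G)}^2$. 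The second contribution in your pointwise splitting then involves $|f_k(\eta)|^2$ for $\eta\notin B_{2R}$ paired with $\xi\in B_{2R}$, so the ``H\"older against $L^{2Q/(Q-2s)}$'' step requires noting that the weight produced by the inner $\xi$-integration decays like $\max\{1,|\eta|\}^{-Q-2s}$ and is therefore in $L^{Q/(2s)}(G)$; alternatively, since only $|\zeta|\le r\le 1$ enters the mollifier bound (translates of $g_k$ by $\zeta$ with $|\zeta|\le 1$ all supported in a fixed compact set, and the $|\zeta|>1$ tail of the seminorm integral is trivially bounded by $\lVert g_k\rVert_{L^2}^2$), one can reduce to the paper's local seminorm estimate on $B_{3R}\times B_{3R}$. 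Either way this is a cosmetic matter and the argument is sound.
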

\begin{proof}
We first show that there exists $C_1>0$ such that $\forall u\in W^{s,2}(G)$, it holds
\begin{equation}\label{part3}
    \int_{B_{2R}(0)}\int_{B_{2R}(0)} \frac{|I_{R}(u)(\xi)-I_{R}(u)(\eta)|^2}{|\eta^{-1}\circ\xi|^{Q+2s}}\md \xi \md \eta\leq C_1
    \int_{G}\int_{G} \frac{|u(\xi)-u(\eta)|^2}{|\eta^{-1}\circ\xi|^{Q+2s}}\md \xi \md \eta.
\end{equation}

In fact, we have
\begin{align}\nonumber
 &\int_{B_{2R}(0)}\int_{B_{2R}(0)} \frac{|I_{R}(u)(\xi)-I_{R}(u)(\eta)|^2}{|\eta^{-1}\circ\xi|^{Q+2s}}\md \xi \md \eta\\
 \nonumber
    =& \int_{B_{2R}(0)}\int_{B_{2R}(0)} \frac{|u(\xi)\phi\big(\delta_{\frac{1}{R}}(\xi)\big)-u(\eta)\phi\big(\delta_{\frac{1}{R}}(\eta)\big)|^2}{|\eta^{-1}\circ\xi|^{Q+2s}}\md \xi \md \eta\\
    \nonumber
    \leq& 2\left(\int_{B_{2R}(0)}\int_{B_{2R}(0)} \frac{|u(\xi)\phi\big(\delta_{\frac{1}{R}}(\xi)\big)-u(\eta)\phi\big(\delta_{\frac{1}{R}}(\xi)\big)|^2}{|\eta^{-1}\circ\xi|^{Q+2s}}\md \xi \md \eta\right.\\
    \nonumber
    &\quad+\left.\int_{B_{2R}(0)}\int_{B_{2R}(0)} \frac{|u(\eta)\phi\big(\delta_{\frac{1}{R}}(\xi)\big)-u(\eta)\phi\big(\delta_{\frac{1}{R}}(\eta)\big)|^2}{|\eta^{-1}\circ\xi|^{Q+2s}}\md \xi \md \eta\right)\\
   \nonumber
    \leq & 2\int_{G}\int_{G} \frac{|u(\xi)-u(\eta)|^2}{|\eta^{-1}\circ\xi|^{Q+2s}}\md \xi \md \eta+\\
    \nonumber
    &2\int_{B_{2R}(0)}\int_{B_{2R}(0)\cap \{|\eta^{-1}\circ\xi|>1\}} \frac{|u(\eta)|^2}{|\eta^{-1}\circ\xi|^{Q+2s}}\md \xi\md \eta
    +\\
     \label{part1}
    &2 \int_{B_{2R}(0)}\int_{B_{2R}(0)\cap \{|\eta^{-1}\circ\xi|\leq 1\}} \frac{|u(\eta)|^2|\phi\big(\delta_{\frac{1}{R}}(\xi)\big)-\phi\big(\delta_{\frac{1}{R}}(\eta)\big)|^2}{|\eta^{-1}\circ\xi|^{Q+2s}}\md \xi \md \eta.
\end{align}
 By  Taylor formula on $G$ (see \cite[Theorem 2]{bo}), there exists $C_2>0$ such that
 \begin{align}\label{b4.8}
\left|\phi\big(\delta_{\frac{1}{R}}(\xi)\big)-\phi\big(\delta_{\frac{1}{R}}(\eta)\big)\right|\leq C_2 |\eta^{-1}\circ\xi|,\; \forall \xi, \eta \in G.
 \end{align}
Substituting \eqref{b4.8} into \eqref{part1}, we obtain
\begin{align*}
&\int_{B_{2R}(0)}\int_{B_{2R}(0)} \frac{|I_{R}(u)(\xi)-I_{R}(u)(\eta)|^2}{|\eta^{-1}\circ\xi|^{Q+2s}}\md \xi \md \eta\\
    \leq & 2\int_{G}\int_{G} \frac{|u(\xi)-u(\eta)|^2}{|\eta^{-1}\circ\xi|^{Q+2s}}\md \xi \md \eta+2\int_{B_{2R}(0)}\int_{B_{2R}(0)\cap \{|\eta^{-1}\circ\xi|>1\}} \frac{|u(\eta)|^2}{|\eta^{-1}\circ\xi|^{Q+2s}}\md \xi\md \eta+\\
    &2C_2 \int_{B_{2R}(0)}\int_{B_{2R}(0)\cap \{|\eta^{-1}\circ\xi|\leq 1\}} \frac{|u(\eta)|^2}{|\eta^{-1}\circ\xi|^{Q+2s-2}}\md \xi\md \eta.
\end{align*}
By  polar coordinates \eqref{polar}, we have
\begin{align*}
\int_{B_{2R}(0)\cap \{|\eta^{-1}\circ\xi|\leq 1\}} \frac{1}{|\eta^{-1}\circ\xi|^{Q+2s-2}}\md \xi\leq& C_3,\\
\int_{B_{2R}(0)\cap \{|\eta^{-1}\circ\xi|>1\}} \frac{1}{|\eta^{-1}\circ\xi|^{Q+2s}}\md \xi\leq& C_4,
\end{align*}
where  $C_3$ and $C_4$  are positive constants independent of $\eta$. Therefore,
\begin{align}\nonumber
 &\int_{B_{2R}(0)}\int_{B_{2R}(0)} \frac{|I_{R}(u)(\xi)-I_{R}(u)(\eta)|^2}{|\eta^{-1}\circ\xi|^{Q+2s}}\md \xi \md \eta\\
    \nonumber
    \leq& 2\int_{G}\int_{G} \frac{|u(\xi)-u(\eta)|^2}{|\eta^{-1}\circ\xi|^{Q+2s}}\md \xi \md \eta+C_5 \|u\|^{2}_{L^2(B_{2R}(0))}\\
    \nonumber
    \leq& 2\int_{G}\int_{G} \frac{|u(\xi)-u(\eta)|^2}{|\eta^{-1}\circ\xi|^{Q+2s}}\md \xi \md \eta+C_6 \|u\|^{2}_{L^{\frac{2Q}{Q-2s}}(B_{2R}(0))}\\
    \nonumber
    \leq & C_7\int_{G}\int_{G} \frac{|u(\xi)-u(\eta)|^2}{|\eta^{-1}\circ\xi|^{Q+2s}}\md \xi \md \eta,
\end{align}
where  $C_5$, $C_6$ and $C_7$ are positive constants independent of $u$.
This proves \eqref{part3}.

By Rellich-Kondrachov  theorem for fractional Sobolev spaces (see \cite[Theorem 7]{gho}), one sees that $I_{R}$ is compact.
The proof of Lemma \ref{lemma 4.2} is thereby completed.
\end{proof}

Now
 we can  show  the embedding map (\ref{a4.1}) is compact.

\begin{lemma}\label{lm4.3}
Let $0<s<1$ and  $2\leq p<\frac{2Q}{Q-2s}$. The embedding map (\ref{a4.1}) is compact.
\end{lemma}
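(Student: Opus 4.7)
The plan is to reduce compactness in the weighted space to the local compactness provided by Lemma \ref{lemma 4.2}, together with a uniform tail estimate based on the sharp Sobolev inequality \eqref{4.1}. Write $q=\frac{2Q}{Q-2s}$, so that by hypothesis $2\leq p<q$. Observe that the weight factor
\begin{align*}
U(z,w)^{q-p}=\left[\Bigl(1+\tfrac{|z|^{2}}{4}\Bigr)^{2}+|w|^{2}\right]^{-\frac{(Q-2s)(q-p)}{4}}
\end{align*}
is continuous and bounded above and below by positive constants on every ball $B_{R}(0)$, while
\begin{align*}
U(z,w)^{q}=\left[\Bigl(1+\tfrac{|z|^{2}}{4}\Bigr)^{2}+|w|^{2}\right]^{-Q/2}\in L^{1}(G),
\end{align*}
as one verifies at once via the polar coordinates \eqref{polar}.

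Let $\{f_{k}\}$ be a bounded sequence in $W^{s,2}(G)$. First I would choose an increasing sequence $R_{j}\to\infty$ and apply Lemma \ref{lemma 4.2} iteratively: since $I_{R_{j}}(f_{k})=f_{k}$ on $B_{R_{j}}(0)$, a standard diagonal extraction produces a subsequence, still denoted by $\{f_{k}\}$, that is Cauchy in $L^{p}(B_{R_{j}}(0))$ for every $j$. Because $U^{q-p}$ is bounded on each $B_{R_{j}}(0)$, the same subsequence is Cauchy in $L^{p}(B_{R_{j}}(0),U^{q-p}\md z\md w)$ for every $j$.

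Next I would estimate the contribution from the exterior of a large ball by Hölder's inequality with conjugate exponents $q/p$ and $q/(q-p)$, which is legitimate precisely because $p<q$:
\begin{align*}
\int_{G\setminus B_{R}(0)}|f_{k}-f_{l}|^{p}\,U^{q-p}\md z\md w
\leq \|f_{k}-f_{l}\|_{L^{q}(G)}^{\,p}\left(\int_{G\setminus B_{R}(0)}U^{q}\md z\md w\right)^{(q-p)/q}.
\end{align*}
By \eqref{4.1}, $\|f_{k}\|_{L^{q}(G)}$ is uniformly bounded, so the first factor on the right is controlled independently of $k$ and $l$, and the second factor tends to $0$ as $R\to\infty$ since $U^{q}\in L^{1}(G)$.

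Combining the two estimates in the usual way — given $\varepsilon>0$, first choose $R$ so large that the tail is smaller than $\varepsilon/2$ uniformly in $k,l$, then invoke the diagonal subsequence on $B_{R}(0)$ to control the interior contribution — yields that $\{f_{k}\}$ is Cauchy in $L^{p}(G,U^{q-p}\md z\md w)$, which proves compactness. The only subtle point is that the Hölder step requires the weight to decay fast enough at infinity, which is exactly ensured by the strict inequality $p<q$ (the endpoint $p=q$ would give $U^{0}=1\notin L^{1}(G)$ and fails, consistent with the well-known lack of compactness at the critical Sobolev exponent); aside from this, no serious obstacle is expected.
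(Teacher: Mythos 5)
Your proof is correct and follows essentially the same strategy as the paper: combine the local compactness furnished by Lemma~\ref{lemma 4.2} with a uniform tail estimate obtained from H\"older's inequality, the fractional Sobolev inequality~\eqref{4.1}, and the integrability of $U^{2Q/(Q-2s)}$, which can be computed via the polar coordinates~\eqref{polar}. The only cosmetic difference is that the paper first reduces the case $2<p<\frac{2Q}{Q-2s}$ to $p=2$ through an interpolation inequality and then proves operator-norm convergence $I_R\to I$ for $p=2$, whereas you apply H\"older directly with exponents $q/p$ and $q/(q-p)$ for arbitrary $p\in[2,q)$ and argue with Cauchy sequences; both are sound, and your version is marginally more direct.
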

\begin{proof}
By H\"older's equality and fractional Sobolev inequalities \eqref{4.1}, we have, for $2< p<\frac{2Q}{Q-2s}$ and
$\theta=1-\frac{2Q-p(Q-2s)}{4s}$,
\begin{align*}
\int_{G}|u|^{p}U(z,w)^{\frac{2Q}{Q-2s}-p}\md z\md w\leq& \left(\int_{G}|u|^{\frac{2Q}{Q-2s}}\md z\md w\right)^{\theta}
\left(\int_{G}|u|^{2}U^{\frac{4s}{Q-2s}}\md z\md w\right)^{1-\theta}\\
\leq& C\left(\int_{G}u\mathcal{L}_{s}u\md z\md w\right)^{\theta\frac{Q}{Q-2s}}
\left(\int_{G}|u|^{2}U^{\frac{4s}{Q-2s}}\md z\md w\right)^{1-\theta}.
\end{align*}
So we  need only to show the case $p=2$.

By Lemma \ref{lemma 4.2},
the imbedding map $I_{R}: W^{s,2}(G)\hookrightarrow L^{2}(B_{2R}(0))$
is compact. On the other hand, we have, by H\"older's inequality and fractional Sobolev inequalities \eqref{4.1},
\begin{equation*}
\begin{split}
&\int_{G}|f-I_{R}(f)|^{2}U(z,w)^{\frac{4s}{Q-2s}}\md z\md w\\
\leq& \int_{G\setminus
B_{R}(0)}|f|^{2}U(z,t)^{\frac{4s}{Q-2s}}\md z\md w\\
\leq&\left(\int_{G\setminus
B_{R}(0)}|f|^{\frac{2Q}{Q-2s}}\md z\md w\right)^{\frac{Q-2s}{Q}}\left(\int_{G\setminus
B_{R}(0)}U(z,w)^{\frac{2Q}{Q-2s}}\md z\md w\right)^{1-\frac{Q-2s}{Q}}\\
\leq& C\int_G f \mathcal{L}_s f\md z\md w  \left(\int_{G\setminus
B_{R}(0)}U(z,w)^{\frac{2Q}{Q-2s}}\md z\md w\right)^{1-\frac{Q-2s}{Q}}.
\end{split}
\end{equation*}
By using  polar coordinates \eqref{polar}, we have
\begin{align*}
\int_{G\setminus
B_{R}(0)}U(z,w)^{\frac{2Q}{Q-2s}}\md z\md w
\leq& \int_{G\setminus B_{R}(0)}\frac{1}{|(z,w)|^{2Q}}\md z\md w\\
=&|\Sigma|\frac{1}{QR^{Q}}\rightarrow0,\;\;R\rightarrow\infty,
\end{align*}
where $|\Sigma|$ is the volume of $\Sigma$. Therefore,
 the embedding map (\ref{a4.1}) is compact because  it is a limit of
compact operators.
This completes the proof of Lemma \ref{lm4.3}.
\end{proof}

Since the embedding map $W^{s,2}(G)\hookrightarrow L^{2}(G,
U(z,w)^{\frac{4s}{Q-2s}}\md z\md w)$ is compact,  the spectrum of
\begin{eqnarray}\label{4.5}
\mathcal{L}_{s}v=\mu U^{\frac{4s}{Q-2s}}v,\;\;v\in
W^{s,2}(G),
\end{eqnarray}
 is discrete. Furthermore, by Lemma \ref{lm3.2},  we have  the following corollary:
\begin{corollary}\label{co4.2}
 Let $\mu_{j},\;j=1,2,\cdots$,  be the eigenvalues
of \eqref{4.5} given in increasing order. Then
 $\mu_{1}=N_{n,m,s}$ is simple with eigenfunction
$U$ where $U$ and $N_{n,m,s}$ are given by \eqref{1.7} and \eqref{N_nms}.
\end{corollary}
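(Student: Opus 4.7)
The strategy is a clean three-step argument: identify $U$ as an eigenfunction, use Hardy's inequality (Lemma \ref{lm3.2}) to bound $\mu_{1}$ from below, and exploit the equality case of Hardy to get simplicity.

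\textbf{Step 1 (Eigenfunction).} I would first verify that $U$ solves \eqref{4.5} with $\mu = N_{n,m,s}$. Observe that $U = \varphi_{-s,1}$ in the notation of Lemma \ref{lm2.1}, so by the identity \eqref{L_s varphi_-s rho} with $\rho=1$,
\begin{align*}
\mathcal{L}_{s}U = N_{n,m,s}\,\varphi_{s,1}.
\end{align*}
A direct algebraic check using $U(z,w) = [(1+|z|^{2}/4)^{2}+|w|^{2}]^{-(Q-2s)/4}$ gives
\begin{align*}
U \cdot U^{\tfrac{4s}{Q-2s}} = \left[\Big(1+\tfrac{|z|^{2}}{4}\Big)^{2}+|w|^{2}\right]^{-\tfrac{Q+2s}{4}} = \varphi_{s,1},
\end{align*}
so $\mathcal{L}_{s}U = N_{n,m,s} U^{4s/(Q-2s)} U$. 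Finiteness of $\int_{G} U \mathcal{L}_{s}U\, d\xi = N_{n,m,s}\int_{G} U^{(Q+2s)/(Q-2s)}\, d\xi$ (which converges by \eqref{b2.16}) also certifies $U \in W^{s,2}(G)$.

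\textbf{Step 2 (Variational characterization and Hardy).} By Lemma \ref{lm4.3}, the weighted embedding is compact, hence standard spectral theory for the self-adjoint operator associated with \eqref{4.5} yields
\begin{align*}
\mu_{1} = \inf_{v \in W^{s,2}(G)\setminus\{0\}}\frac{\int_{G} v\,\mathcal{L}_{s}v\, d\xi}{\int_{G} v^{2} U^{4s/(Q-2s)}\, d\xi}.
\end{align*}
Since $U^{4s/(Q-2s)} = [(1+|z|^{2}/4)^{2}+|w|^{2}]^{-s}$, Hardy's inequality \eqref{Hardy-type U inequality} of Lemma \ref{lm3.2} is exactly the statement
\begin{align*}
\int_{G} v\,\mathcal{L}_{s}v\, d\xi \geq N_{n,m,s}\int_{G} v^{2} U^{4s/(Q-2s)}\, d\xi, \qquad \forall v \in W^{s,2}(G).
\end{align*}
This gives $\mu_{1} \geq N_{n,m,s}$. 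Combined with Step 1 (which shows $U$ is an admissible function making the quotient equal to $N_{n,m,s}$), I conclude $\mu_{1} = N_{n,m,s}$.

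\textbf{Step 3 (Simplicity).} Any eigenfunction $v$ corresponding to $\mu_{1}$ must saturate the Rayleigh quotient, hence saturate Hardy's inequality. The equality clause in Lemma \ref{lm3.2} forces $v = c U$ for some constant $c \in \mathbb{R}$, so the $\mu_{1}$-eigenspace is one-dimensional, i.e. $\mu_{1}$ is simple.

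The only mild obstacle is the bookkeeping: justifying the variational characterization for this \emph{weighted} problem, which is where Lemma \ref{lm4.3} (compactness of the weighted embedding) plays its essential role, and confirming $U \in W^{s,2}(G)$, which follows from the explicit computation in Step 1. Everything else is a direct application of results already proved in the paper.
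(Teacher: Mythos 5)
Your proof is correct and follows essentially the same route the paper intends: the paper deduces Corollary \ref{co4.2} directly from Lemma \ref{lm3.2} (Hardy's inequality with its equality case) together with the discreteness of the spectrum established via Lemma \ref{lm4.3}, and you have simply made that three-step chain explicit. The only thing the paper leaves implicit that you correctly supply is the identification $U=\varphi_{-s,1}$ and the algebra showing $\mathcal{L}_sU=N_{n,m,s}U^{4s/(Q-2s)}U$ via \eqref{L_s varphi_-s rho}, which matches \eqref{equation L_s u=u}.
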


Define
\begin{equation}\label{U_}
   U_{\mu,\eta}(\xi)=\mu^{\frac{Q-2s}{2}}U(\delta_{\mu}(\eta^{-1}\circ\xi)),\;\;\;\;\;\eta\in G,\; \mu>0 .
\end{equation}
By \eqref{2.2} and \eqref{L_s varphi_-s rho}, we have
\begin{align}\label{equation L_s u=u}
    \mathcal{L}_s U_{\mu,\eta}=N_{n,m,s} U_{\mu,\eta}^{\frac{Q+2s}{Q-2s}},
\end{align}
where $N_{n,m,s}$ is given in \eqref{N_nms}.
If we set $\eta=(z',w')\in G$, then   we have
\begin{equation}\label{L_s partial U}
    \begin{split}
           &\mathcal{L}_s\frac{\partial  U_{\mu,\eta}}{\partial
  z'_{j}}=N_{n,m,s}\frac{Q+2s}{Q-2s} U_{\mu,\eta}^{\frac{4s}{Q-2s}}\frac{\partial  U_{\mu,\eta}}{\partial
  z'_{j}},\;\;\;\;j=1,,\cdots,2n;\\
 &\mathcal{L}_s\frac{\partial  U_{\mu,\eta}}{\partial
  w'_{r}}=N_{n,m,s}\frac{Q+2s}{Q-2s} U_{\mu,\eta}^{\frac{4s}{Q-2s}}\frac{\partial  U_{\mu,\eta}}{\partial
  w'_{r}},\;\;\;\;r=1,,\cdots,m;\\
  &\mathcal{L}_s\frac{\partial  U_{\mu,\eta}}{\partial
  \mu}=N_{n,m,s}\frac{Q+2s}{Q-2s} U_{\mu,\eta}^{\frac{4s}{Q-2s}}\frac{\partial  U_{\mu,\eta}}{\partial \mu}.
    \end{split}
\end{equation}
 Set
\begin{equation}\label{def of omega_i}
    \begin{split}
            \omega_{j}&=\frac{4}{Q-2s}U(\xi)^{-1}\left.\frac{\partial U_{\mu,\eta}}{\partial
  z'_{j}}\right|_{\mu=1,\eta=0},\;\;j=1,\cdots,2n;\\
     \omega_{2n+r}&=\frac{4}{Q-2s}\left.U(\xi)^{-1}\frac{\partial U_{\mu,\eta}}{\partial
  w'_{r}}\right|_{\mu=1,\eta=0},\;\;r=1,\cdots,m;  \\
\omega_{2n+m+1}&=\frac{2}{Q-2s}U(\xi)^{-1} \left. \frac{\partial U_{\mu,\eta}}{\partial
 \mu}\right|_{\mu=1,\eta=0},
    \end{split}
\end{equation}
Then we have (see \cite{ya1})
\begin{align}\label{sum omega_j}
   \sum_{j=1}^{2n+m+1} \omega_j^2=1
\end{align}
and by (\ref{L_s partial U}),
\begin{eqnarray}\label{4.12}
\mathcal{L}_{s}\Big(U\omega_{j}\Big)=N_{n,m,s}\frac{Q+2s}{Q-2s} U^{\frac{Q+2s}{Q-2s}}\omega_{j},\;\;j=1,2,\cdots,2n+m+1.
\end{eqnarray}

By Lemma \ref{lm4.3}, the
minimization problem
\begin{align}\label{4.6}
 \Lambda_{p}=\inf\left\{\int_{G}f \mathcal{L}_s f \md z\md w : \; \int_{G}|f|^{p}U(z,w)^{\frac{2Q}{Q-2s}-p}\md z\md w=1\right\}
\end{align}
for $2\leq p<\frac{2Q}{Q-2s}$ has a  solution $u$.
Since
\begin{align*}
\int_{G}|u| \mathcal{L}_s |u| \md \xi=&
a_{n,m,s}\int_G\int_G \frac{||u(\xi)|-|u(\eta)||^2}{|\eta^{-1}\circ\xi|^{Q+2s}}\md \xi \md \eta\\
\leq&a_{n,m,s}\int_G\int_G \frac{|u(\xi)-u(\eta)|^2}{|\eta^{-1}\circ\xi|^{Q+2s}}\md \xi \md \eta\\
=&\int_{G}u \mathcal{L}_s u \md\xi,
\end{align*}
we have that  if $u$ is a solution of \eqref{4.6}, so does $|u|$. So we may assume $u\geq0$.

Next we  show that such $u$ satisfies
a moment zero condition.
\begin{lemma}\label{lm3.3}
Let $0<s<1$,  $2\leq p<\frac{2Q}{Q-2s}$ and $u$ be a  nonnegative  solution of \eqref{4.6}.
Then we have
\begin{align}\label{3.2}
\int_{G}u^{p}U(z,w)^{\frac{2Q}{Q-2s}-p}\omega_{j}\md z\md w=0,\;j=1,2,\cdots,2n+m+1,
\end{align}
where $\omega_{j}\;(1\leq j\leq 2n+m+1)$ are  given by \eqref{def of omega_i}.
\end{lemma}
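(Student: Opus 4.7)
The plan is to exploit the fact that the Sobolev seminorm $\int f\,\mathcal{L}_s f\,\md\xi$ is invariant under the family of left translations and dilations $f\mapsto v_{\mu,\eta}$ with $v_{\mu,\eta}(\xi):=\mu^{(Q-2s)/2}u(\delta_\mu(\eta^{-1}\circ\xi))$, by the left-invariance of $\mathcal{L}_s$ together with the scaling \eqref{2.2}, whereas the weighted constraint $\int|f|^p U^{q-p}\,\md\xi$ (with $q:=\frac{2Q}{Q-2s}$) is not invariant. Setting
\begin{equation*}
G(\mu,\eta):=\int_G|v_{\mu,\eta}|^p U^{q-p}\,\md\xi,
\end{equation*}
the $(2n+m+1)$-dimensional orbit of $u$ under these transformations produces exactly $(2n+m+1)$ first-order conditions, which I expect will match \eqref{3.2}.

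First I would show that $(\mu,\eta)=(1,0)$ is a global maximum of $G$ on $(0,\infty)\times G$. Clearly $G(1,0)=1$ by the constraint. If $G(\mu_0,\eta_0)>1$ for some $(\mu_0,\eta_0)$, then $w:=G(\mu_0,\eta_0)^{-1/p}v_{\mu_0,\eta_0}$ would satisfy $\int|w|^pU^{q-p}\,\md\xi=1$ and
\begin{equation*}
\int w\,\mathcal{L}_sw\,\md\xi=G(\mu_0,\eta_0)^{-2/p}\int v_{\mu_0,\eta_0}\mathcal{L}_sv_{\mu_0,\eta_0}\,\md\xi=G(\mu_0,\eta_0)^{-2/p}\Lambda_p<\Lambda_p,
\end{equation*}
contradicting the definition of $\Lambda_p$. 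Hence $\partial_\mu G|_{(1,0)}=0$ and $\partial_{\eta_j}G|_{(1,0)}=0$ for $j=1,\ldots,2n+m$.

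To translate these into the moment identities, I would change variables $\zeta=\delta_\mu(\eta^{-1}\circ\xi)$ to rewrite
\begin{equation*}
G(\mu,\eta)=\mu^{p(Q-2s)/2-Q}\int_G u(\zeta)^p\,U(\eta\circ\delta_{1/\mu}(\zeta))^{q-p}\,\md\zeta.
\end{equation*}
Differentiating at $(1,0)$ and using $q(Q-2s)/2=Q$ (which makes the additive constants from the Jacobian and the $\mu^{(Q-2s)/2}$ prefactor cancel), together with the constraint $\int u^pU^{q-p}\md\xi=1$ and the identity $\eta^{-1}=-\eta$ valid in any H-type group (so that $\partial_{\eta_j}|_0 U(\eta\circ\zeta)=-\partial_{\eta_j}|_0 U(\eta^{-1}\circ\zeta)$), the derivatives can be matched to the definitions \eqref{def of omega_i} in the form
\begin{align*}
\partial_{\eta_j}G|_{(1,0)}&=-\tfrac{(q-p)(Q-2s)}{4}\int_G u^pU^{q-p}\omega_j\,\md\xi,\\
\partial_\mu G|_{(1,0)}&=-\tfrac{(q-p)(Q-2s)}{2}\int_G u^pU^{q-p}\omega_{2n+m+1}\,\md\xi.
\end{align*}
Since $p<q$, the vanishing of the left-hand sides forces \eqref{3.2}.

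The main obstacle is the bookkeeping of the last step: one must carefully track cancellations among the various $\mu$-prefactors (from the Jacobian and from the conformal normalization $\mu^{(Q-2s)/2}$ built into $v_{\mu,\eta}$) so that all constant terms coming from $\partial_\mu$ assemble cleanly into the coefficient of $\omega_{2n+m+1}$ alone, and likewise verify that the horizontal derivatives $\partial_{\eta_j}$ reproduce precisely the group action in \eqref{def of omega_i} with the correct sign arising from $\eta^{-1}=-\eta$.
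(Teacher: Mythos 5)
Your proof is correct and takes essentially the same approach as the paper: exploit the invariance of the quadratic form $\int u\,\mathcal{L}_s u\,\md\xi$ under the $(2n+m+1)$-parameter family of group translations and dilations, so that minimality of $u$ for \eqref{4.6} forces the weighted constraint integral to be stationary at $(\mu,\eta)=(1,0)$, and the resulting first-order conditions match \eqref{3.2} via the definitions \eqref{def of omega_i}. The only cosmetic difference is that you phrase stationarity as a global maximum of $G(\mu,\eta)$, while the paper derives it from the minimality of the Rayleigh quotient $\mathcal{F}_p(u_{\mu^{-1},\eta^{-1}})$; the two are equivalent.
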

\begin{proof}
Set
\begin{align*}
\mathcal{F}_{p}(u)=&\frac{\int_{G}u \mathcal{L}_s u\md z\md w}{\left(\int_{G}u^{p}U(z,w)^{\frac{2Q}{Q-2s}-p}\md z\md w\right)^{\frac{2}{p}}};\\
u_{\mu^{-1},\eta^{-1}}(\xi)=&\mu^{-\frac{Q-2s}{2}}u(\delta_{\mu^{-1}}(\eta\circ\xi)),
\end{align*}
where $\mu>0$ and  $\eta=(z',w')\in G$.
By using \eqref{2.2}, we have
\begin{align*}
 \int_{G}u_{\mu^{-1},\eta^{-1}}\mathcal{L}_s u_{\mu^{-1},\eta^{-1}}\md z\md w=&\int_{G}u\mathcal{L}_s u\md z\md w;\\
 \int_{G}u_{\mu^{-1},\eta^{-1}}^{p}U(z,w)^{\frac{2Q}{Q-2s}-p}\md z\md w=&\int_{G}u^{p}U_{\mu,\eta}(z,w)^{\frac{2Q}{Q-2s}-p}\md z\md w,
\end{align*}
where $U_{\mu,\eta}$ is given by \eqref{U_}. Therefore,
\begin{align}\label{3.3}
\mathcal{F}_{p}(u_{\mu^{-1},\eta^{-1}})=
\frac{\int_{G}u\mathcal{L}_s u \md z\md w}{\left(\int_{G}u^{p}U_{\mu,\eta}(z,w)^{\frac{2Q}{Q-2s}-p}\md z\md w\right)^{\frac{2}{p}}}.
\end{align}
Since $u$ is a nonnegative  solution of \eqref{4.6}, we have
\begin{equation}\label{4.155}
  \begin{split}
    \frac{\partial}{\partial z'_{j}}\mathcal{F}_{p}(u_{\mu^{-1},\eta^{-1}})|_{\mu=1,\eta=0}=&0,\; j=1,\cdots,2n;\\
      \frac{\partial}{\partial w'_{r}}\mathcal{F}_{p}(u_{\mu^{-1},\eta^{-1}})|_{\mu=1,\eta=0}=&0,\; r=1,\cdots,m;\\
    \frac{\partial}{\partial \mu}\mathcal{F}_{p}(u_{\mu^{-1},\eta^{-1}})|_{\mu=1,\eta=0}=&0.
  \end{split}
\end{equation}
The desired result follows.
\end{proof}

\begin{lemma}\label{L_s f w_i, fw_i}
It holds that for any $f\in W^{s,2}(G)$,
\begin{align*}
\sum_{j=1}^{2n+m+1}\int_{G}f\omega_{j}\mathcal{L}_{s}( f \omega_j)\md \xi\leq \int_{G}f\mathcal{L}_{s}f\md \xi+\frac{4s}{Q-2s}N_{n,m,s}
\int_{G}\frac{f^{2}}{[(1+\frac{|z|^2}{4})^2+|w|^2]^s}\md \xi.
\end{align*}

\end{lemma}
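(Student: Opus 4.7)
The plan is to compare both sides of the inequality by means of the ground-state representation \eqref{representation L_s f,f}, producing an exact remainder term that is then controlled by a weighted AM-GM bound. The crucial ingredient is an exact pointwise identity, which I would derive by computing one auxiliary bilinear form in two different ways.

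First, I apply the polarized ground-state identity \eqref{formula for Hs} to each summand $\int_G f\omega_j\,\mathcal{L}_s(f\omega_j)\md\xi$, sum over $j=1,\dots,2n+m+1$, and use the elementary identity
\begin{equation*}
\sum_j \bigl(A\omega_j(\xi)-B\omega_j(\eta)\bigr)^2 = (A-B)^2 + AB\sum_j (\omega_j(\xi)-\omega_j(\eta))^2,
\end{equation*}
which is immediate from $\sum_j\omega_j^2\equiv 1$, with $A=f(\xi)$, $B=f(\eta)$. Reabsorbing the $(A-B)^2$-part back into $\int f\mathcal{L}_s f$ via \eqref{representation L_s f,f} gives
\begin{equation*}
\sum_j \int_G f\omega_j\,\mathcal{L}_s(f\omega_j)\md\xi = \int_G f\mathcal{L}_s f\md\xi + a_{n,m,s}\int_G\int_G \frac{f(\xi)f(\eta)\sum_j(\omega_j(\xi)-\omega_j(\eta))^2}{|\eta^{-1}\circ\xi|^{Q+2s}}\md\xi\md\eta.
\end{equation*}

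I would next prove the key pointwise identity
\begin{equation*}
a_{n,m,s}\int_G \frac{U(\eta)\sum_j(\omega_j(\xi)-\omega_j(\eta))^2}{|\eta^{-1}\circ\xi|^{Q+2s}}\md\eta = \frac{4s}{Q-2s}\,N_{n,m,s}\,U(\xi)^{\frac{Q+2s}{Q-2s}}
\end{equation*}
by computing the quantity $\sum_j\int_G U\omega_j\,\mathcal{L}_s(f\omega_j)\md\xi$ in two ways for an arbitrary test $f\in C_0^\infty(G)$. On one side, self-adjointness of $\mathcal{L}_s$ together with \eqref{4.12} gives $N_{n,m,s}\frac{Q+2s}{Q-2s}\int_G f\,U^{(Q+2s)/(Q-2s)}\md\xi$. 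On the other side, polarizing \eqref{formula for Hs} and expanding through the bilinear identity
\begin{equation*}
\sum_j (\alpha\omega_j(\xi)-\beta\omega_j(\eta))(A\omega_j(\xi)-B\omega_j(\eta)) = (\alpha-\beta)(A-B) + \tfrac12(\alpha B+\beta A)\sum_j(\omega_j(\xi)-\omega_j(\eta))^2
\end{equation*}
with $\alpha=U(\xi),\beta=U(\eta)$, the same quantity becomes $\int_G U\mathcal{L}_s f\md\xi$, which by \eqref{L_s varphi_-s rho} at $\rho=1$ (so that $\mathcal{L}_s U = N_{n,m,s}U^{(Q+2s)/(Q-2s)}$) equals $N_{n,m,s}\int_G fU^{(Q+2s)/(Q-2s)}\md\xi$, plus a symmetric double integral that, by the $\xi\leftrightarrow\eta$ invariance of the kernel, collapses to $a_{n,m,s}\int_G f(\xi)\bigl[\int_G U(\eta)\cdots\md\eta\bigr]\md\xi$. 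Equating the two evaluations and varying $f$ (fundamental lemma of the calculus of variations) yields the claimed pointwise identity.

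Finally, I apply the weighted AM-GM bound $f(\xi)f(\eta)\leq \frac12\bigl[\frac{U(\eta)}{U(\xi)}f(\xi)^2 + \frac{U(\xi)}{U(\eta)}f(\eta)^2\bigr]$ (valid because $U>0$ and $|f(\xi)f(\eta)|\leq\sqrt{ab}\leq\frac12(a+b)$) to the remainder obtained in the first step. The $\xi\leftrightarrow\eta$-symmetry of the kernel reduces it to
\begin{equation*}
a_{n,m,s}\int_G \frac{f^2(\xi)}{U(\xi)}\int_G \frac{U(\eta)\sum_j(\omega_j(\xi)-\omega_j(\eta))^2}{|\eta^{-1}\circ\xi|^{Q+2s}}\md\eta\md\xi;
\end{equation*}
substituting the key identity and using $U(\xi)^{(Q+2s)/(Q-2s)-1} = U(\xi)^{4s/(Q-2s)} = [(1+|z|^2/4)^2+|w|^2]^{-s}$, this equals $\frac{4s}{Q-2s}N_{n,m,s}\int_G f^2 [(1+|z|^2/4)^2+|w|^2]^{-s}\md\xi$. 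Combined with the first step, this yields the stated inequality. The main obstacle is the two-way evaluation establishing the pointwise identity; once that identity is in hand, the rest is essentially bookkeeping.
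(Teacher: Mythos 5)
Your proof is correct, and it takes a genuinely different route than the paper. The paper runs two parallel ground-state decompositions — applying \eqref{3.9} with $g=U$ to obtain $\int f\mathcal{L}_s f=N_{n,m,s}\int f^2[(1+|z|^2/4)^2+|w|^2]^{-s}+R$ (with $R$ a nonnegative double-integral remainder), and again with $g=U\omega_j$ to obtain, after summing over $j$ and applying Cauchy--Schwarz to $\sum_j\omega_j(\xi)\omega_j(\eta)\leq 1$, the bound $\sum_j\int f\omega_j\mathcal{L}_s(f\omega_j)\leq N_{n,m,s}\frac{Q+2s}{Q-2s}\int f^2[\cdot]^{-s}+R$ — and then subtracts the two, so the remainder $R$ cancels. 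You instead decompose $\sum_j\big(f(\xi)\omega_j(\xi)-f(\eta)\omega_j(\eta)\big)^2=(f(\xi)-f(\eta))^2+f(\xi)f(\eta)\sum_j(\omega_j(\xi)-\omega_j(\eta))^2$, isolate the pointwise identity $a_{n,m,s}\int_G U(\eta)\sum_j(\omega_j(\xi)-\omega_j(\eta))^2\,|\eta^{-1}\circ\xi|^{-Q-2s}\md\eta=\frac{4s}{Q-2s}N_{n,m,s}U(\xi)^{\frac{Q+2s}{Q-2s}}$ by a two-way evaluation of $\sum_j\int_G U\omega_j\mathcal{L}_s(f\omega_j)$, and finish by the weighted AM--GM bound $f(\xi)f(\eta)\leq \frac12\big[\frac{U(\eta)}{U(\xi)}f(\xi)^2+\frac{U(\xi)}{U(\eta)}f(\eta)^2\big]$. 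Both proofs use the same three ingredients — the ground-state representation \eqref{representation L_s f,f}, the eigenfunction equations \eqref{equation L_s u=u}, \eqref{4.12}, and $\sum_j\omega_j^2\equiv 1$ — but package them differently. Your route has the small advantage of never passing through the quotient $f_j/g_j$ with $g_j=U\omega_j$ (which can vanish and change sign and so makes the paper's intermediate step \eqref{f^2 omega_i U^{-1}} require a limiting argument or a direct algebraic check), at the cost of needing the extra pointwise identity, which itself requires the usual regularity and density considerations to justify the ``fundamental lemma of the calculus of variations'' step (i.e., one should observe that the quantity $a_{n,m,s}\int_G U(\eta)\sum_j(\omega_j(\xi)-\omega_j(\eta))^2|\eta^{-1}\circ\xi|^{-Q-2s}\md\eta$ is a continuous function of $\xi$, since $\sum_j(\omega_j(\xi)-\omega_j(\eta))^2$ vanishes quadratically at $\eta=\xi$).
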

\begin{proof}
By choosing $g=U$ and $h=f^2U^{-1}$ in \eqref{3.9}, we have
\begin{equation}\label{f^2 U^-1}
    \begin{split}
        \int_{G} f^{2}U^{-1} \mathcal{L}_{s} U \md\xi=&\int_{G} U \mathcal{L}_{s} (f^{2}U^{-1}) \md\xi\\
        =&\int_G f\mathcal{L}_s f\md \xi -a_{n,m,s}\int_{G}\int_{G}\left|\frac{f(\xi)}{U^2(\xi)}-\frac{f(\eta)}{U^2(\eta)}\right|^2\frac{U(\xi)
U(\eta)}{| \eta^{-1}\circ\xi|^{Q+2s}}\md\xi \md\eta,
    \end{split}
\end{equation}
Similarly, choosing $g=U\omega_j$ and $h=f^2\omega_jU^{-1} $ in \eqref{3.9} yields, for $j=1,2,\cdots,2n+m+1$,
\begin{equation}\label{f^2 omega_i U^{-1}}
    \begin{split}
        \int_{G} f^{2}\omega_jU^{-1}\mathcal{L}_{s} (U\omega_j) \md\xi=&   \int_{G}U\omega_j \mathcal{L}_{s} (f^{2}\omega_jU^{-1}) \md\xi\\
        =&\int_{G}f\omega_j\mathcal{L}_s (f\omega_j)\md\xi-a_{n,m,s}\times\\
           &\int_{G}\int_{G}\left|\frac{f(\xi)}{U^2(\xi)}-\frac{f(\eta)}{U^2(\eta)}\right|^2\frac{U(\xi)
U(\eta)}{| \eta^{-1}\circ\xi|^{Q+2s}}\omega_j(\xi)\omega_j(\eta)\md\xi \md\eta.
    \end{split}
\end{equation}
Combining \eqref{equation L_s u=u} and \eqref{f^2 U^-1} yields
\begin{align}\nonumber
    \int_G f\mathcal{L}_s f \md \xi&=N_{n,m,s}\int_G \frac{ f^2}{[(1+\frac{|z|^2}{4})^2+|w|^2]^s}\md \xi\\
    \label{b4.24}
    &\quad+a_{n,m,s}\int_{G}\int_{G}\left|\frac{f(\xi)}{U^2(\xi)}-\frac{f(\eta)}{U^2(\eta)}\right|^2\frac{U(\xi)
U(\eta)}{| \eta^{-1}\circ\xi|^{Q+2s}}\md\xi \md\eta.
\end{align}
On the other hand,
summing \eqref{f^2 omega_i U^{-1}} over $j$ and using \eqref{sum omega_j} and \eqref{4.12}, we obtain
\begin{align}\nonumber
   & \sum_{j=1}^{2n+m+1}\int_{G}f\omega_{j}\mathcal{L}_{s}( f \omega_j)\md \xi\\
   \nonumber
   =&N_{n,m,s}\frac{Q+2s}{Q-2s} \int_G \frac{f^2}{[(1+\frac{|z|^2}{4})^2+|w|^2]^s}\md z\md w+\\
   \nonumber
    &a_{n,m,s}\int_{G}\int_{G}\left|\frac{f(\xi)}{U^2(\xi)}-\frac{f(\eta)}{U^2(\eta)}\right|^2\frac{U(\xi)
U(\eta)}{| \eta^{-1}\circ\xi|^{Q+2s}}\sum_{j=1}^{2n+m+1}\omega_j(\xi)\omega_j(\eta)\md\xi \md\eta\\
\nonumber
\leq&N_{n,m,s}\frac{Q+2s}{Q-2s} \int_G \frac{f^2}{[(1+\frac{|z|^2}{4})^2+|w|^2]^s}\md z\md w+\\
\label{b4.25}
&a_{n,m,s}\int_{G}\int_{G}\left|\frac{f(\xi)}{U^2(\xi)}-\frac{f(\eta)}{U^2(\eta)}\right|^2\frac{U(\xi)
U(\eta)}{| \eta^{-1}\circ\xi|^{Q+2s}}\md\xi \md\eta.
\end{align}
To get \eqref{b4.25}, we use the  Cauchy-Schwarz inequality
\begin{align*}
    \left|\sum_{j=1}^{2n+m+1}\omega_j(\xi)\omega_j(\eta)\right|\leq \left(\sum_{j=1}^{2n+m+1}\omega_j^2(\xi)\right)^{1/2}\left(\sum_{j=1}^{2n+m+1}\omega_j^2(\eta)\right)^{1/2}=1.
\end{align*}
The desired result follows by combining \eqref{b4.24} and \eqref{b4.25}.
\end{proof}

Now we can give the proof of Theorem \ref{th1.2}. The idea is due to Frank and Lieb \cite{fl2,fl3} and Hang and Wang \cite{hang}.

\vspace{0.4cm}

\textbf{Proof of Theorem \ref{th1.2}}. Let $0<s<1$,  $2\leq p<\frac{2Q}{Q-2s}$ and $u_{p}$ be a  nonnegative  solution of \eqref{4.6}.
The 2nd variation of the functional $\mathcal{F}_{p}$ around $u_{p}$ shows that
\begin{align*}%\label{3.7}
\int_{G}f\mathcal{L}_s f\md z\md w&\int_{G}u_{p}^{p}U(z,w)^{\frac{2Q}{Q-2s}-p}\md z\md w-\\
&(p-1)
\int_{G}u_{p}\mathcal{L}_s u_p\md z\md w \int_{G}u_{p}^{p-2}U(z,w)^{\frac{2Q}{Q-2s}-p}f^{2}\md z\md w\geq0,
\end{align*}
for any $f$ with
\begin{align*}
\int_{G}u_{p}^{p-1}U(z,w)^{\frac{2Q}{Q-2s}-p}f\md z\md w=0.
\end{align*}
 Choosing  $f_j=u_{p}\omega_{j}$, $j=1,2,\cdots,2n+m+1$ and
summing over $j$, we obtain,
  in view of \eqref{sum omega_j} and Lemma \ref{L_s f w_i, fw_i},
\begin{align*}
(p-1)\int_G u_p\mathcal{L}_s u_p\md z\md w &\leq \sum_{j=1}^{2n+m+1}\int_G u_p\omega_j\mathcal{L}_s(u_p\omega_j)\md z\md w\\
&\leq \int_G u_p\mathcal{L}_s u_p\md z\md w+\frac{4s}{Q-2s}N_{n,m,s}\int_G \frac{u_p^2}{[(1+\frac{|z|^2}{4})^2+|w|^2]^s}\md z\md w.
\end{align*}
That is,
\begin{align*}
&(p-2)\left(\int_G u_p\mathcal{L}_s u_p\md z\md w-N_{n,m,s}\int_G \frac{u_p^2}{[(1+\frac{|z|^2}{4})^2+|w|^2]^s}\md z\md w\right)\\
\leq& N_{n,m,s}\left( \frac{2Q}{Q-2s}-p  \right)\int_G \frac{u_p^2}{[(1+\frac{|z|^2}{4})^2+|w|^2]^s} \md z\md w\\
\leq&N_{n,m,s}\left(\frac{2Q}{Q-2s}-p\right)\left(\int_{G}u_{p}^{p}U(z,w)^{\frac{2Q}{Q-2s}-p}\md z\md w\right)^{\frac{2}{p}}
\left(\int_{G}U(z,w)^{\frac{2Q}{Q-2s}}\md z\md w\right)^{1-\frac{2}{p}}\\
=&N_{n,m,s}\left(\frac{2Q}{Q-2s}-p\right)
\left(\int_{G}U(z,w)^{\frac{2Q}{Q-2s}}\md z\md w\right)^{1-\frac{2}{p}}\rightarrow0,\; p\nearrow \frac{2Q}{Q-2s}.
\end{align*}
To get the last equality, we use the fact that $\int_{G}u_{p}^{p}U(z,w)^{\frac{2Q}{Q-2s}-p}\md z\md w=1$.
Therefore, by Lemma \ref{lm3.2}, we get
\begin{align*}
\int_G u_p\mathcal{L}_s u_p\md z\md w-N_{n,m,s}\int_G \frac{u_p^2}{[(1+\frac{|z|^2}{4})^2+|w|^2]^s}\md z\md w\rightarrow0,\; p\nearrow \frac{2Q}{Q-2s}.
\end{align*}
Closely following the proof given in  \cite[Lemma 3.5]{ya1}, one can choose a sequence $\{p_{k}: k=1,2,\cdots \}$
such that $ p_{k}\nearrow \frac{2Q}{Q-2s}$ and   $u_{p_{k}}$ converges to a nonzero function $c_{0}U$.  Thus $c_{0}U$  is an  extremal function of
 \begin{align*}
 \Lambda_{p}=\inf\left\{\int_{G}f \mathcal{L}_s f \md z\md w : \; \int_{G}|f|^{\frac{2Q}{Q-2s}}\md z\md w=1\right\}.
 \end{align*}

Now we compute the sharp constant $S_{n,m,s}$. Since $U$  is an  extremal function, we have
\begin{align*}
    S_{n,m,s}=\frac{\int_G U\mathcal{L}_s U\md z\md w}{\left(\int_G U^{\frac{2Q}{Q-2s}}\md z\md w\right)^{(Q-2s)/Q}}
        =N_{n,m,s}\left(\int_G U(z,w)^{\frac{2Q}{Q-2s}}\md z\md w\right)^{2s/Q}.
\end{align*}
We compute
\begin{align}\nonumber
    \int_G U(z,w)^{\frac{2Q}{Q-2s}}\md z\md w =\int_GJ_{\mathscr{C}}(\xi)\md\xi=&\int_{\R^{2n}\times \R^{m}} \left(\left(1+\frac{|z|^2}{4} \right)^2 +|w|^2\right)^{-Q/2}\md z\md w\\
    \nonumber
    =&4^{n}\int_{\R^{2n}}(1+|z|^2)^{-(Q-m)}\md z \int_{\R^m}(1+|w|^2)^{-Q/2}\md w\\
    \label{b4.26}
    =&4^{n}\pi^{n+\frac{m}{2}}\frac{\Gamma(n+\frac{m}{2})}{\Gamma (2n+m)}.
\end{align}
To get the last equality, we use \eqref{b2.16}.
Therefore,
\begin{align}\label{4.27}
   S_{n,m,s}&=N_{n,m,s}\left(  \int_GJ_{\mathscr{C}}(\xi)\md\xi\right)^{2s/Q}\\
   \nonumber
   &=4^{s\frac{Q+2n}{Q}}\pi^{s\frac{2n+m}{Q}}\frac{\Gamma(\frac{n+1+s}{2})\Gamma(\frac{n+m+s}{2})}{\Gamma(\frac{n+1-s}{2})\Gamma(\frac{n+m-s}{2})}\left(  \frac{\Gamma(n+\frac{m}{2})}{\Gamma (2n+m)}\right)^{2s/Q}.
\end{align}
This completes the proof of  Theorem \ref{th1.2}.

\section{Proof of Theorem \ref{th1.3} and \ref{th1.4}}

We first give the proof of Theorem \ref{th1.3}.
\vspace{0.4cm}

\textbf{Proof of Theorem \ref{th1.3}}. By Corollary \ref{co1.1}, we have
\begin{align}\nonumber
 &\int_G\int_G \frac{|f(\xi)-f(\eta)|^2}{|\eta^{-1}\circ\xi|^{Q-2s}}J_{\mathscr{C}}^{\frac{Q+2s}{2Q}}(\xi)J_{\mathscr{C}}^{\frac{Q+2s}{2Q}}(\eta)\md\xi \md\eta \\
 \nonumber
 =&2 \int_G\int_G \frac{f^2(\xi)}{|\eta^{-1}\circ\xi|^{Q-2s}}J_{\mathscr{C}}^{\frac{Q+2s}{2Q}}(\xi)J_{\mathscr{C}}^{\frac{Q+2s}{2Q}}(\eta)\md\xi \md\eta
- 2\int_{G}\int_{G} \frac{f(\xi)f(\eta)}{|\eta^{-1}\circ\xi|^{Q-2s}}J_{\mathscr{C}}^{\frac{Q+2s}{2Q}}(\xi)J_{\mathscr{C}}^{\frac{Q+2s}{2Q}}(\eta)\md\xi \md\eta\\
\nonumber
\geq&2 \int_G\int_G \frac{f^2(\xi)}{|\eta^{-1}\circ\xi|^{Q-2s}}J_{\mathscr{C}}^{\frac{Q+2s}{2Q}}(\xi)J_{\mathscr{C}}^{\frac{Q+2s}{2Q}}(\eta)\md\xi \md\eta
-\frac{2}{S_{n,m,s}c_{n,m,s}}\left\|fJ_{\mathscr{C}}^{\frac{Q+2s}{2Q}}\right\|_{\frac{2Q}{Q+2s}}^2\\
\label{5.1}
=&\frac{2}{S_{n,m,s}c_{n,m,s}}\left(\frac{S_{n,m,s}}{N_{n,m,s}}\int_G f^2(\xi)J_{\mathscr{C}}(\xi)\md \xi-\left\|fJ_{\mathscr{C}}^{\frac{Q+2s}{2Q}}\right\|_{\frac{2Q}{Q+2s}}^2\right).
\end{align}
To get the last equality, we use the fact
\begin{align}\nonumber
    J_{\mathscr{C}}^{\frac{Q-2s}{2Q}}(\xi)=U(\xi)=\mathcal{L}_{s}^{-1}\mathcal{L}_{s}U(\xi)=&
    c_{n,m,s}N_{n,m,s}\int_{G}\frac{U^{\frac{Q+2s}{Q-2s}}(\eta)}{|\eta^{-1}\circ\xi|^{Q-2s}}\md \eta\\
    \label{5.2}
    =&
    c_{n,m,s}N_{n,m,s}\int_{G}\frac{J_{\mathscr{C}}^{\frac{Q+2s}{2Q}}(\eta)}{|\eta^{-1}\circ\xi|^{Q-2s}}\md \eta.
\end{align}
Passing to the limit as $s\searrow 0$ in \eqref{5.1} and using
\begin{align*}
     \lim_{s\searrow 0}\frac{S_{n,m,s}c_{n,m,s}}{s}=\frac{\Gamma(\frac{n+1}{2})\Gamma(\frac{n+m}{2})}{2^{n+1}\pi^{n+
\frac{m+1}{2}}},
\end{align*}
we get
\begin{align}\nonumber
&\int_G\int_G \frac{|f(\xi)-f(\eta)|^2}{|\eta^{-1}\circ\xi|^{Q-2s}}J_{\mathscr{C}}^{1/2}(\xi)J_{\mathscr{C}}^{1/2}(\eta)\md\xi \md\eta \\
\label{5.3}
\geq & \frac{2^{n+2}\pi^{n+
\frac{m+1}{2}}}{\Gamma(\frac{n+1}{2})\Gamma(\frac{n+m}{2})}
\lim_{s\searrow 0}\frac{1}{s}\left(\frac{S_{n,m,s}}{N_{n,m,s}}\int_G f^2(\xi)J_{\mathscr{C}}(\xi)\md \xi-\left\|fJ_{\mathscr{C}}^{\frac{Q+2s}{2Q}}\right\|_{\frac{2Q}{Q+2s}}^2\right).
\end{align}
For simplicity, we let
\begin{align*}
|\mathbb{S}^{2n+m}|=\int_{\mathbb{S}^{2n+m}}\md\sigma=\int_G J_{\mathscr{C}}(\xi)\md \xi.
\end{align*}
By using the  Cayley transform   (see Remark \ref{remark1.1}) and \eqref{4.27} and following the proof given in \cite[Corollary 2.5]{fl2},
we obtain
\begin{align}\nonumber
&\lim_{s\searrow 0}\frac{1}{s}\left(\frac{S_{n,m,s}}{N_{n,m,s}}\int_G f^2(\xi)J_{\mathscr{C}}(\xi)\md \xi-\left\|fJ_{\mathscr{C}}^{\frac{Q+2s}{2Q}}\right\|_{\frac{2Q}{Q+2s}}^2\right)\\
\nonumber
=&\lim_{s\searrow 0}\frac{1}{s}\left(|\mathbb{S}^{2n+m}|^{\frac{Q+2s}{Q}}-\|f\circ \mathscr{C}^{-1}\|^2_{L^{\frac{2Q}{Q+2s}}(\mathbb{S}^{2n+m})}\right)\\
\label{5.4}
=&\frac{2}{Q}\int_G f^2(\xi)\ln f^2(\xi) J_{\mathscr{C}}(\xi)\md \xi.
\end{align}
Substituting \eqref{5.4}  into \eqref{5.3}, we get
\begin{align*}
       \int_G\int_G \frac{|f(\xi) -f(\eta)|^2}{|\eta^{-1}\circ\xi|^Q}J^{1/2}_{\mathscr{C}}(\xi)J^{1/2}_{\mathscr{C}}(\eta)\md \xi \md \eta
     \geq \frac{2^{n+3}\pi^{n+\frac{m+1}{2}}}{Q\Gamma(\frac{n+1}{2})\Gamma(\frac{n+m}{2})}\int_G f^2(\xi)\ln f^2(\xi) J_{\mathscr{C}}(\xi)\md \xi.
\end{align*}

Next we show the sharpness. Following \cite{fl2},   we let
\begin{align*}
F_{\varepsilon}(\xi)=C_{\varepsilon}\left(\sqrt{1-\varepsilon^{2}}+\varepsilon \omega_{1}\right),\; 0<\varepsilon<\frac{1}{2},
\end{align*}
where $\omega_1$ is given in \eqref{def of omega_i} and $C_{\varepsilon}$ is such that $\int_{G}|F_{\varepsilon}(\xi)|^{2}J_{\mathscr{C}}(\xi)\md\xi=|\mathbb{S}^{2n+m}|$. A simple calculation shows
\begin{align}\label{5.5}
C_{\varepsilon}=|\mathbb{S}^{2n+m}|^{1/2}\left(\int_{G}|\sqrt{1-\varepsilon^{2}}+\varepsilon \omega_{1}|^{2}J_{\mathscr{C}}(\xi)\md\xi\right)^{-1/2}\rightarrow 1,\;\varepsilon\searrow 0.
\end{align}
We compute
\begin{align}\nonumber
&\int_G\int_G \frac{|F_{\varepsilon}(\xi)-F_{\varepsilon}(\eta)|^2}{|\eta^{-1}\circ\xi|^{Q-2s}}J_{\mathscr{C}}^{\frac{Q+2s}{2Q}}(\xi)J_{\mathscr{C}}^{\frac{Q+2s}{2Q}}(\eta)\md\xi \md\eta \\
\nonumber
=&C_{\varepsilon}^{2}\varepsilon^{2}\int_G\int_G \frac{|\omega_{1}(\xi)-\omega_1(\eta)|^2}{|\eta^{-1}\circ\xi|^{Q-2s}}J_{\mathscr{C}}^{\frac{Q+2s}{2Q}}(\xi)J_{\mathscr{C}}^{\frac{Q+2s}{2Q}}(\eta)\md\xi \md\eta \\
 \nonumber
 =&2C_{\varepsilon}^{2}\varepsilon^{2} \int_G\int_G \frac{\omega_{1}^{2}(\xi)}{|\eta^{-1}\circ\xi|^{Q-2s}}J_{\mathscr{C}}^{\frac{Q+2s}{2Q}}(\xi)J_{\mathscr{C}}^{\frac{Q+2s}{2Q}}(\eta)\md\xi \md\eta-\\
 \label{5.6}
& 2C_{\varepsilon}^{2}\varepsilon^{2}\int_{G}\int_{G} \frac{\omega_{1}(\xi)\omega_{1}(\eta)}{|\eta^{-1}\circ\xi|^{Q-2s}}J_{\mathscr{C}}^{\frac{Q+2s}{2Q}}(\xi)J_{\mathscr{C}}^{\frac{Q+2s}{2Q}}(\eta)\md\xi \md\eta.
\end{align}
By \eqref{4.12} and Lemma \ref{lm2.2}, we have
\begin{align}\nonumber
    J_{\mathscr{C}}^{\frac{Q-2s}{2Q}}(\xi)\omega_1(\xi)=&\mathcal{L}_{s}^{-1}\mathcal{L}_{s}\Big(U(\xi)\omega_1(\xi)\Big)\\
    \label{5.7}
  =  &
    c_{n,m,s}N_{n,m,s}\frac{Q+2s}{Q-2s}\int_{G}\frac{J_{\mathscr{C}}^{\frac{Q+2s}{2Q}}(\eta)\omega_{1}(\eta)}{|\eta^{-1}\circ\xi|^{Q-2s}}\md \eta.
\end{align}
Substituting \eqref{5.2} and \eqref{5.7} into \eqref{5.6}, we obtain
\begin{align*}
&\int_G\int_G \frac{|F_{\varepsilon}(\xi)-F_{\varepsilon}(\eta)|^2}{|\eta^{-1}\circ\xi|^{Q-2s}}J_{\mathscr{C}}^{\frac{Q+2s}{2Q}}(\xi)J_{\mathscr{C}}^{\frac{Q+2s}{2Q}}(\eta)\md\xi  \\\\
=&\frac{8C_{\varepsilon}^{2}\varepsilon^{2}s}{c_{n,m,s}N_{n,m,s}(Q+2s)}\int_{G}\omega_1^{2}(\xi)J_{\mathscr{C}}(\xi)\md\xi.
\end{align*}
Therefore,
\begin{align}\nonumber
&\int_G\int_G \frac{|F_{\varepsilon}(\xi)-F_{\varepsilon}(\eta)|^2}{|\eta^{-1}\circ\xi|^{Q}}J_{\mathscr{C}}^{1/2}(\xi)J_{\mathscr{C}}^{1/2}(\eta)\md\xi  \\
\nonumber
=&\lim_{s\searrow 0}\frac{8C_{\varepsilon}^{2}\varepsilon^{2}s}{c_{n,m,s}N_{n,m,s}(Q+2s)}\int_{G}\omega_1^{2}(\xi)J_{\mathscr{C}}(\xi)\md\xi\\
\label{5.8}
=&\frac{2^{n+4}\pi^{n+\frac{m+1}{2}}}{Q\Gamma(\frac{n+1}{2})\Gamma(\frac{n+m}{2})}\varepsilon^{2}\int_{G}\omega_1^{2}(\xi)J_{\mathscr{C}}(\xi)\md\xi.
\end{align}

On the other hand,
\begin{align}\nonumber
&\int_G F_{\varepsilon}^{2}(\xi)\ln F_{\varepsilon}^{2}(\xi) J_{\mathscr{C}}(\xi)\md \xi\\
\nonumber
=&C_{\varepsilon}^{2}
\int_G \left(1+2\varepsilon \sqrt{1-\varepsilon^{2}}\omega_{1}-\varepsilon^{2}(1-\omega_{1}^{2})\right)
\left[\ln\left(1+2\varepsilon \sqrt{1-\varepsilon^{2}}\omega_{1}-\varepsilon^{2}(1-\omega_{1}^{2})\right)\right.\\
\label{5.9}
&+\left.\ln\frac{|\mathbb{S}^{2n+m}|}{\int_{G}|\sqrt{1-\varepsilon^{2}}+\varepsilon \omega_{1}|^{2}J_{\mathscr{C}}(\xi)\md\xi}\right] J_{\mathscr{C}}(\xi)\md \xi.
\end{align}
It is easy to verify that
\begin{equation}\label{5.10}
  \begin{split}
1+2\varepsilon \sqrt{1-\varepsilon^{2}}\omega_{1}-\varepsilon^{2}(1-\omega_{1}^{2})=&1+2\varepsilon \omega_{1}+O(\varepsilon^{2});\\
\ln\left(1+2\varepsilon \sqrt{1-\varepsilon^{2}}\omega_{1}-\varepsilon^{2}(1-\omega_{1}^{2})\right)=&
2\varepsilon \omega_{1}-\varepsilon^{2}(1+\omega_{1}^{2})+O(\varepsilon^{3}).
  \end{split}
\end{equation}

Noticing that $U(\xi)$ and $U(\xi)\omega_1(\xi)$ are eigenfunctions corresponding to different eigenvalues of \eqref{4.5},
we have
\begin{align}\label{5.11}
\int_{G}\omega_1(\xi)J_{\mathscr{C}}(\xi)\md\xi=\int_{G}\omega_1(\xi)U(\xi)\cdot U(\xi)\cdot U^{\frac{4s}{Q-2s}}(\xi)\md\xi=0.
\end{align}
By using \eqref{5.11}, we have
\begin{align}\nonumber
\ln\frac{|\mathbb{S}^{2n+m}|}{\int_{G}|\sqrt{1-\varepsilon^{2}}+\varepsilon \omega_{1}|^{2}J_{\mathscr{C}}(\xi)\md\xi}=&\nonumber
-\ln\frac{\int_{G}(1-\varepsilon^{2}+\varepsilon^{2}\omega_{1}^{2})J_{\mathscr{C}}(\xi)\md\xi}{|\mathbb{S}^{2n+m}|}\\
\label{5.12}
=&
\varepsilon^{2}
\left(1-\frac{1}{|\mathbb{S}|^{2n+m}}\int_{G}\omega_1^{2}(\xi)J_{\mathscr{C}}(\xi)\md\xi\right)+O(\varepsilon^{3}).
\end{align}

Substituting \eqref{5.10} and \eqref{5.12} into \eqref{5.9} and using \eqref{5.11}, we obtain
\begin{align}\label{5.13}
\int_G F_{\varepsilon}^{2}(\xi)\ln F_{\varepsilon}^{2}(\xi) J_{\mathscr{C}}(\xi)\md \xi=2\varepsilon^{2}C_{\varepsilon}^{2}
\int_{G}\omega_1^{2}(\xi)J_{\mathscr{C}}(\xi)\md\xi+O(\varepsilon^{3}).
\end{align}
By using \eqref{5.5},  \eqref{5.8} and \eqref{5.13}, we get
\begin{align*}
\frac{\int_G\int_G \frac{|F_{\varepsilon}(\xi)-F_{\varepsilon}(\eta)|^2}{|\eta^{-1}\circ\xi|^{Q}}J_{\mathscr{C}}^{1/2}(\xi)J_{\mathscr{C}}^{1/2}(\eta)\md\xi\md \eta  }
{\int_G F_{\varepsilon}^{2}(\xi)\ln F_{\varepsilon}^{2}(\xi) J_{\mathscr{C}}(\xi)\md \xi}
\rightarrow \frac{2^{n+3}\pi^{n+\frac{m+1}{2}}}{Q\Gamma(\frac{n+1}{2})\Gamma(\frac{n+m}{2})},\;\; \varepsilon\searrow 0.
\end{align*}
This proves that the constant $\frac{2^{n+3}\pi^{n+\frac{m+1}{2}}}{Q\Gamma(\frac{n+1}{2})\Gamma(\frac{n+m}{2})}$ is  sharp. The proof of Theorem \ref{th1.3}
is thereby completed.

\vspace{0.4cm}

Before the proof of Theorem \ref{th1.4}, we first recall the following extension problem on $G$:
\begin{align}\label{5.14}
    \begin{cases}
    \partial_{\rho\rho}u+\frac{1-2s}{\rho}\partial_{\rho}u+\frac{1}{4}\rho^2\Delta_w u-\mathcal{L} u=0, &\text{in}~\Omega= G\times \R^+,\\
    \lim\limits_{\rho \searrow0}u(z,w,\rho)=f(z,w)\in W^{s,2}(G), &\text{on}~\partial\Omega\simeq G,
    \end{cases}
\end{align}
where $0<s<1$. Let  $1<p<\infty$.
It has been shown by Roncal and Thangavelu \cite{ron2} (see   also
 Garofalo and Tralli \cite{ga1,ga2}    for  a  complete self-contained approach  to these construction via  heat equation methods) that if
\begin{align*}
\int_{G}|u(z,w,\rho)|^{p}\md z\md w\leq C
\end{align*}
uniformly in $\rho$, then the solution of \eqref{5.14}
is given by the  Poisson integral
\begin{align}\label{5.15}
\mathscr{P}(f)=C_1(n,m,s)\rho^{2s} f\ast \varphi_{s,\frac{1}{4}\rho^{2}},
\end{align}
Moreover,
if $\mathcal{L}_{s}f\in L^{p}(G)$,  then we have
\begin{align}\label{a5.1}
-\lim_{\rho\rightarrow0+}\rho^{1-2s}\partial_{\rho}\mathscr{P}(f)=2^{1-2s}\frac{\Gamma(1-s)}{\Gamma(s)}\mathcal{L}_{s}f,
\end{align}
where $C_1(n,m,s)=2^{-2n-2s}\pi^{-n-\frac{m}{2}}\frac{\Gamma(n+s)\Gamma(\frac{n+m+s}{2})}{\Gamma(s)\Gamma(\frac{n+s}{2})}$.

Now we can prove Theorem \ref{th1.4}.
\vspace{0.3cm}

\textbf{Proof of Theorem \ref{th1.4}.}
We first show \eqref{1.14}.
Without loss of generality, we assume $u\in C_{0}^{\infty}(G\times \mathbb{R})$ and let $f(z,w)=u(z,w,0)$.
For simplicity, we set
 \begin{align*}
  \nabla u=&\left(\partial_{\rho} u, \frac{1}{2}\rho\nabla_{w}u, \nabla_{G}\right),\\
  \Delta u=& \partial_{\rho\rho}u+\frac{1-2s}{\rho}\partial_{\rho}u+\frac{1}{4}\rho^2\Delta_w u-\mathcal{L} u,
 \end{align*}
 such that,  for $g,h\in C_{0}^{\infty}(G\times (0,\infty))$, it holds
 \begin{align*}
&\int_0^\infty\int_G \langle \nabla g(z,w,\rho),\nabla h(z,w,\rho)\rangle \cdot \rho^{1-2s}\md z\md w\md\rho\\
=&-\int_0^\infty\int_G g(z,w,\rho)\Delta h(z,w,\rho)  \rho^{1-2s}\md z\md w\md\rho.
 \end{align*}
Obviously,
\begin{align}\label{5.17}
\Delta \mathscr{P}(f)=0,\;\; (u-\mathscr{P}(f))|_{\rho=0}=0,
\end{align}
where $\mathscr{P}(f)$ is the  Poisson integral given by \eqref{5.15}.

We claim that
   \begin{align} \nonumber
        &\int_0^\infty\int_G |\nabla u|^2\rho^{1-2s}\md z\md w\md \rho\\
        \label{5.18}
        =&2^{1-2s}\frac{\Gamma(1-s)}{\Gamma(s)}\int_G f\mathcal{L}_{s}f\md \xi+
         \int_0^\infty\int_G\left|\nabla (u-\mathscr{P}(f))\right|^2 \rho^{1-2s}\md z\md w\md \rho.
   \end{align}
 In fact, through integrating by parts, we obtain, by using \eqref{5.17}
    \begin{align}\nonumber
        &\int_0^\infty\int_G\left|\nabla u-\nabla \mathscr{P}(f)\right|^2 \rho^{1-2s}\md z\md w\md \rho\\
        \nonumber
        =&\int_0^\infty\int_G |\nabla u|^2\rho^{1-2s}\md z\md w\md \rho-\int_0^\infty\int_G |\nabla \mathscr{P}(f)|^2\rho^{1-2s} \md z\md w\md \rho\\
        \nonumber
        &-2\int_0^\infty\int_G \langle\nabla (u-\mathscr{P}(f)),\nabla \mathscr{P}(f)\rangle \rho^{1-2s}\md z\md w\md \rho\\
        \nonumber
    =&\int_0^\infty\int_G |\nabla u(z,t,\rho)|^2\rho^{1-2s}\md z\md w\md \rho-\int_0^\infty\int_G |\nabla \mathscr{P}(f)|^2\rho^{1-2s} \md z\md w\md \rho \\
    \nonumber
    &\quad +2\int_0^\infty\int_G (u-\mathscr{P}(f))\Delta \mathscr{P}(f)\rho^{1-2s}\md z\md w\md \rho\\
    \label{5.19}
        =&\int_0^\infty\int_G |\nabla u|^2\rho^{1-2s}\md z\md w\md \rho-\int_0^\infty\int_G |\nabla \mathscr{P}(f)|^2\rho^{1-2s} \md z\md w\md \rho.
    \end{align}
On the other hand, by using  (\ref{5.17}) and (\ref{a5.1}), we have
\begin{align}\nonumber
\int_0^\infty\int_G\left|\nabla \mathscr{P}(f)\right|^2 \rho^{1-2s}\md z\md w\md \rho
=&-\int_G  f\lim_{\rho\rightarrow0+}\rho^{1-2s}\partial_{\rho}\mathscr{P}(f) \md z\md w\\
\label{5.16}
=&2^{1-2s}\frac{\Gamma(1-s)}{\Gamma(s)}\int_G f\mathcal{L}_{s}f\md z\md w.
\end{align}
Substituting \eqref{5.16} into \eqref{5.19}, we obtain  \eqref{5.18}. This proves the claim.

By using \eqref{5.18} and Theorem \ref{th1.2}, we get
 \begin{align*}
\int_0^\infty\int_G |\nabla u|^2\rho^{1-2s}\md z\md w\md \rho\geq &2^{1-2s}\frac{\Gamma(1-s)}{\Gamma(s)}\int_G f\mathcal{L}_{s}f\md \xi\\
\geq&2^{1-2s}\frac{\Gamma(1-s)}{\Gamma(s)} S_{n,m,s}\left(\int_{G}|f(\xi)|^{\frac{2Q}{Q-2s}}\md\xi\right)^{\frac{Q-2s}{Q}}.
 \end{align*}
 This proves \eqref{1.14}.

On the other hand, by \eqref{5.16} and Theorem \ref{th1.2}, one sees an extremal function of \eqref{1.14} is given
by
\begin{align*}
u=\mathscr{P}(U)=C_1(n,m,s)\rho^{2s} U\ast \varphi_{s,\frac{1}{4}\rho^{2}}.
\end{align*}
The proof of Theorem \ref{th1.4}
is thereby completed.

\end{document}